\documentclass[letterpaper, 12pt]{article}

\usepackage{palatino}
\title{The heat and the landscape I}
\author{B\'alint Vir\'ag}
\date{}

\oddsidemargin -0.5in \topmargin 0in \headheight 0in \headsep 0in
\textheight 9in \textwidth 7.3in

\usepackage{amsmath, amsthm, amssymb}
\usepackage{enumerate}
\usepackage{color}

\usepackage[longnamesfirst]{natbib}
\bibpunct[ ]{(}{)}{,}{a}{}{,}

    \newtheorem{theorem}{Theorem}
    \newtheorem{lemma}[theorem]{Lemma}
    \newtheorem{proposition}[theorem]{Proposition}
    \newtheorem{corollary}[theorem]{Corollary}

\theoremstyle{definition} 
    
    \newtheorem{remark}[theorem]{Remark}

\newcommand{\eps}{\varepsilon}
\newcommand{\Z}{{\mathbb Z}}

\newcommand{\R}{{\mathbb R}}

\newcommand{\cd}{\stackrel{d}{\longrightarrow}}
\newcommand{\cp}{\stackrel{p}{\longrightarrow}}
\newcommand{\ed}{\stackrel{d}{=}}
\newcommand{\cB}{{\mathcal B}}

\newcommand{\cK}{{\mathcal K}}
\newcommand{\cS}{{\mathcal S}}

\newcommand{\cR}{{\mathcal R}}
\newcommand{\cO}{{\mathcal O}}
\newcommand{\cf}{{\mathfrak f}}
\newcommand{\blp}{\text{\sc blp}}
\newcommand{\ep}{\text{\sc ep}}
\newcommand{\uc}{\text{\sc uc}}

\newcommand{\supn}{\operatorname{supn}}

\newcommand{\ncirc}{\circ_{\mbox{\tiny n}}}

\newcommand{\one}{\mathbf 1}
\newcommand{\zero}{\mathbf 0}

\begin{document}

\maketitle
\vspace{-3em}
\center{University of Toronto}

\medskip

\abstract{{\it Heat }flows in 1+1 dimensional stochastic environment converge after scaling to the random geometry described by the directed {\it landscape.} In this first part, we show that the O'Connell-Yor polymer and the KPZ equation converge to the KPZ fixed point.

The key is that one-dimensional  Baik-Ben\,Arous-Peche statistics characterize the KPZ fixed point. This yields a general and elementary method that shows convergence based on previously established limit theorems.

Independently, at the {\it same time and place}
\cite{QS} gave an unrelated proof of KPZ convergence. The methods invite extensions in different directions: ours to polymer models, and theirs to interacting particle systems.
}

\section{Introduction}

Some important interacting particle systems on the line, some last passage models and some polymer models share a defining geometric structure. This is believed to be universal, in the sense that after scaling, it should converge to the directed landscape, a random directed metric on the plane. Key examples are the classic KPZ equation of \cite{kardar1986dynamic} or the equivalent continuum random directed polymer model, and the O'Connell-Yor polymer. The precise description of these models and scaling for the following theorem is given in Section \ref{s:application}.

\begin{theorem}\label{t:main}
Let $H_n(t,x)$ denote the free energy of the appropriately rescaled O'Connell-Yor model or the solution of the KPZ equation started from a deterministic continuous initial condition $H(0,x)\le c+b|x|$ for all $x$. Then for any finite set of triples $(t_i,x_i,a_i)$
$$
P(H_n(t_i,x_i) \le a_i\forall i ) \to P(H(x_i,t_i)\le a_i\forall i ),
$$
where $H$ is the KPZ fixed point started from $H(0,\cdot)$. For fixed times $t>0$ we have compact convergence in law.
\end{theorem}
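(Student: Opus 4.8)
I would separate the statement into a \emph{limit-side characterization} and an \emph{input step} that quotes existing one-point limit theorems. The limit-side part rests on the variational (metric-composition) description of the KPZ fixed point: $H$ satisfies $H(t,x)=\sup_y\bigl(H(s,y)+\cL(y,s;x,t)\bigr)$ for a field $\cL$ that I will show emerges, with all its defining properties, from the rescaled models. Finite-dimensional convergence is the main content; the compact convergence in law at a fixed $t>0$ then follows from it once tightness in $C$ on compacts is in place, which I treat at the end.

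\textbf{Structural features of any subsequential limit.} Both the O'Connell--Yor free energy and the logarithm of the stochastic heat equation satisfy an \emph{exact} composition identity at the level of partition functions, of the form $Z_n(t,x)=\int Z_n(s,y)\,\kappa_n(y,s;x,t)\,dy$, in which the increment kernels $\kappa_n$ over disjoint time strips are independent and stationary under space--time translation, because they are built from disjoint portions of the driving white noise (resp. disjoint blocks of independent Brownian motions). Under $1{:}2{:}3$ scaling the exponent in this convolution acquires a large prefactor, so a Laplace/Varadhan principle turns $\log\int$ into $\sup$; hence any subsequential limit $H$ of $H_n$ satisfies the variational formula above with respect to a limiting field $\cL$ that inherits: independence of increments over disjoint time strips, space--time stationarity, the reflection and affine (skew) symmetries, and -- the genuinely new feature produced by the scaling limit -- $1{:}2{:}3$ scale invariance. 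In particular the time-$1$ piece of $\cL$ is a random continuous two-parameter field (an Airy-sheet candidate) solving the self-consistent composition equation $\cS \stackrel{d}{=} \cS' \circ \cS''$ with $\cS',\cS''$ independent and related to $\cS$ by the scaling, and with each single-source section of $\cS$ a copy of the parabolic Airy process.

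\textbf{Characterization and input.} The key lemma is then: a field $\cL$ with exactly these symmetries whose narrow-wedge-with-drift (more generally multi-spike) one-point marginals are the Baik--Ben\,Arous--P\'ech\'e family must be the directed landscape, so that $H$ is the KPZ fixed point. I would prove this by using scale invariance and the variational formula to reduce everything to time-$1$ data, and the reflection symmetry to convert the ``transversal'' one-point statistics of the fixed point started from one and from several spikes into the ``longitudinal'' finite-dimensional distributions of the parabolic Airy process; the multi-spike data then identifies every single-source section of the Airy-sheet candidate, and the composition fixed-point equation together with the symmetries pins down the full two-parameter sheet, hence all of $\cL$. For the input step I would simply feed in the known one-point convergence theorems: for the O'Connell--Yor polymer with an added drift (equivalently a spiked boundary), and for the KPZ equation / stochastic heat equation started from a narrow wedge, from a wedge with drift, and from the continuous data $H(0,x)\le c+b|x|$ in the statement, the one-point distribution of $H_n(t,x)$ already converges to the corresponding BBP / KPZ-fixed-point marginal. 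The growth hypothesis $H(0,x)\le c+b|x|$ enters twice: since $\cL(y,0;x,t)\sim -y^2/t$, it forces the supremum $\sup_y\bigl(H(0,y)+\cL(y,0;x,t)\bigr)$ to be attained on a tight set of exit points, which is what lets the one-point-with-spikes data determine the general initial condition through the variational formula; and it supplies the uniform one-point tail and modulus-of-continuity bounds needed for tightness in $C$ on compacts, whence the fixed-time compact convergence follows from the finite-dimensional convergence by a standard argument.

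\textbf{Main obstacle.} I expect the hard part to be the concentration step of the second paragraph: proving, uniformly in $n$ and over a noncompact range of exit points, that the \emph{positive-temperature} composition $\log\int$ genuinely converges under $1{:}2{:}3$ scaling to the \emph{zero-temperature} metric supremum -- i.e. that the rescaled log-partition functions converge to a metric-composition semigroup, not merely that their one-point marginals converge -- and, hand in hand with this, controlling the exit points of the resulting variational problem. The characterization lemma itself is the conceptual heart, but once the correct list of symmetries is available it should reduce, via reflection and scaling, to near-elementary facts about the parabolic Airy process and about uniqueness for the Airy-sheet composition equation.
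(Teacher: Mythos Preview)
Your high-level idea --- characterize the limit via BBP one-point marginals --- matches the paper, but your proposed characterization has two genuine gaps. First, you want every subsequential limit to inherit $1{:}2{:}3$ scale invariance and then use that symmetry in the characterization; but scale invariance of a limit along $n_k$ would require that $\cK_{cn_k}$ converge to the same object for every $c>0$, which is exactly the uniqueness of the limit you are trying to establish. Independent increments, stationarity and reflection pass to subsequential limits for free; scale invariance does not, so as written the argument is circular. Second, you claim that ``the composition fixed-point equation together with the symmetries pins down the full two-parameter sheet'', and hence the landscape. Whether the distance laws determine the law of the Airy sheet is explicitly flagged in the paper as an open problem; the uniqueness for the Airy-sheet composition equation that you call ``near-elementary'' is not available. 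There is a further, smaller gap: the BBP input gives one-point laws for \emph{pointed edge process} test functions $\cB_{\nu_1}\cdots\cB_{\nu_k}\cdot\cS(x,y)$, not directly for the multi-spike initial data you want to feed into your reflection argument; bridging that is precisely where the paper's main technical work lies.

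The paper avoids all of this by never attempting to identify the sheet or the landscape --- only the \emph{distance laws} $(f,g)\mapsto\operatorname{Law}(f\cdot\cS\cdot g)$, which is exactly the KPZ fixed point and is all Theorem~\ref{t:main} needs. Its characterization (Theorem~\ref{t:char}) assumes no symmetry of the limit whatsoever: if $F\cdot\cS_1\cdot G\ed F\cdot\cS_2\cdot G$ for all pointed edge processes $F,G$, then $\cS_1,\cS_2$ have the same distance laws. The engine is a density statement (Theorem~\ref{t:density}): every compactly supported Lipschitz $f$ is approximated by random functions whose laws are finite signed \emph{linear combinations} of pointed edge process laws. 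This comes from a Dieker--Warren intertwining (Theorem~\ref{t:mix}) that expresses Brownian last-passage laws as signed combinations of nonintersecting-Brownian top-line laws, followed by a time-inversion trick converting parallel edge processes to pointed ones. The metric Burke property then lets the one-sided BBP convergence results supply the needed two-sided $F\ncirc\cK_n\ncirc G$ data, and tightness is handled not through modulus-of-continuity estimates but through the stationary (Baik--Rains) fluctuation bounds combined with Burke, which control the growth of $\cK_n$ uniformly in $n$ (Theorem~\ref{t:second}).
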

The theorem holds for more general initial conditions which take longer to describe, including all classically considered ones,  see Corollary \ref{c:compact}. 

The proof is based on a new characterization of the KPZ fixed point. We show that its law is determined by a simple collection of observables. It is best to formulate this in terms of a hands-on representation of the KPZ fixed point given by the Airy sheet, see \cite{DOV} for a precise definition.

The Airy sheet is a random continuous function $\mathbb \R^2\to \mathbb R$. It maps upper semicontinuous ($\uc$) functions to random functions via the metric composition
$$
f\mapsto f\cdot \cS, \qquad f\cdot g=\sup f+g, \qquad (f\cdot \cS)(y)=  f(\cdot)\cdot \cS(\cdot,y).
$$
This gives a Markov transition kernel on the space of functions, called the KPZ fixed point, introduced by \cite{matetski2016kpz}. It is an open problem whether this kernel determines the law of the Airy sheet. The KPZ fixed point is characterized by the functional
\begin{align}\nonumber
\uc^2 &\to
\mbox{probability measures on }  \R\cup\{\pm \infty\}\\
(f,g)&\mapsto \mbox{Law}(f\cdot \cS \cdot g). \label{e:fg}
\end{align}
The law can be described in terms of cumulative distribution function $a\mapsto P(f\cdot \cS \cdot g\le a)$, the probability that the fixed point started from initial condition $f$ is bounded above by $a-g$ at time 1. In fact, this is  the characterization used in the original definition in \cite{matetski2016kpz}. We call this functional the {\bf distance laws} of $\cS$.

The KPZ fixed point is equivalent to the distance laws of the Airy sheet.

The {\bf edge process} $\ep(a,h,\nu)$ is the distribution of {\bf top line} of Brownian motions started at time $a$, nondecreasing height vector $h$ with nondecreasing drift vector $\nu$, and conditioned not to intersect.
We call an edge process {\bf pointed} if the heights are all the same. Pointed edge processes are the distribution of the top eigenvalue of drifted Hermitian Brownian motion $\operatorname{GUE}_{t-a}+hI+\operatorname{diag}(\nu)t$. They are also the distributions of Brownian last passage with drift $\nu$, even when the entries of $\nu$ are reordered.

\cite{baik2005phase} show that in a certain scaling, the law of the top eigenvalue at a fixed time, as the dimension $n\to\infty$ with all but finitely many $\nu$ equal to zero, converges to what are now called Baik-Ben\,Arous-Peche (BBP) laws, a modification of the Tracy-Widom law. These laws are also followed by top eigenvalues of the stochastic Airy operators under finite rank perturbations, see \cite{bloemendal2013limits} and \cite{bloemendal2016limits}.

In Brownian last passage, the BBP results can be interpreted as follows. As Browninan last passage converges to the Airy sheet, we can keep a few drifted lines on the top and on the bottom. In the right scaling, these become random initial conditions and random ``test functions'' $(f,g)$ as in $\eqref{e:fg}$. The fix point distributions converge to the BBP laws. As long as the laws of these pairs $(f,g)$ are dense in the appropriate sense, they will characterize the distance laws of $\cS$.

The following theorem can be used to characterize the KPZ fixed point. The proof uses the work of \cite{dieker2008determinantal}, which can be used to represent the law of the top line discrete last passage percolation as linear combination of top line laws for nointersecting random walks.

\begin{theorem}[Characterization theorem] \label{t:char} Let $\cS_1,\cS_2: \mathbb R^2 \mapsto \mathbb R\cup \{-\infty\}$ be random upper semicontinuous functions so that $\sup_u \cS_i(u)/(1+\|u\|)$ are finite.

Assume that for all pointed edge processes $F,G(\cdot -)$ with $F,G,S_i$ independent, $F\cdot \cS_1\cdot G \ed F\cdot \cS_2\cdot G$. Then $\cS_1,\cS_2$ have the same distance laws.
\end{theorem}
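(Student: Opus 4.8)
The plan is to whittle the deterministic test pairs $(f,g)$ down to a small determining class, then to match that class against pointed edge processes using \cite{dieker2008determinantal}, and finally to transfer the equality term by term through the metric composition with $\cS_i$.

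\textbf{Step 1: reduction to simple test functions.} Because $\sup_u\cS_i(u)/(1+\|u\|)<\infty$, for $f,g$ with the same type of linear upper bound the supremum defining $f\cdot\cS_i\cdot g=\sup_{x,y}\bigl(f(x)+\cS_i(x,y)+g(y)\bigr)$ is a.s.\ finite and attained on a tight compact set, and on such sets the maxima of a pointwise decreasing net of upper semicontinuous functions converge to the maximum of the limit. Hence if $f_n\downarrow f$, $g_n\downarrow g$ pointwise with $f_n,g_n$ continuous, then $f_n\cdot\cS_i\cdot g_n\downarrow f\cdot\cS_i\cdot g$ almost surely, and it suffices to treat continuous $f,g$. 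For continuous $f,g$ the map $h\mapsto h\cdot\cS_i\cdot g$ is moreover continuous for hypograph convergence (again using the growth bound and the continuity of $\cS_i$), and any continuous $f$ is a hypograph limit of maxima of finitely many narrow wedges $\mathfrak{d}_z+h$; so it suffices to prove $f\cdot\cS_1\cdot g\ed f\cdot\cS_2\cdot g$ when $f=\max_i(\mathfrak{d}_{z_i}+h_i)$ and $g=\max_j(\mathfrak{d}_{w_j}+c_j)$, i.e.\ to match the laws of $\max_{i,j}\bigl(h_i+\cS(z_i,w_j)+c_j\bigr)$. (The thresholds $a-h_i-c_j$ in the associated distribution functions already have the additive form carried by every distance law, confirming that this class is determining.)

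\textbf{Step 2: narrow wedges and their maxima via edge processes.} A one-line pointed edge process $\ep(a,h,\nu)$ is $h$ plus a Brownian motion with drift $\nu$ started at time $a$; sending $\nu\to-\infty$ it converges in the hypograph topology to the narrow wedge $\mathfrak{d}_a+h$, and by the localization above $\ep(a,h,\nu)\cdot\cS_i\cdot G\to(\mathfrak{d}_a+h)\cdot\cS_i\cdot G$, so individual narrow wedges are limits of pointed edge processes. A maximum of several narrow wedges sitting at different points is, however, not a \emph{pointed} edge process, and bridging this gap is where the combinatorics enters. Via the previous section a $k$-line pointed edge process is Brownian last passage from a point through $k$ drifted lines, i.e.\ $\mathfrak{d}_a\cdot\cB$ for an auxiliary Brownian last-passage object $\cB$ independent of $\cS_i$. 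The identities of \cite{dieker2008determinantal} express the law of the top line of last passage from a general, multi-source boundary in a discrete or semidiscrete model as a signed linear combination of laws of top lines of non-colliding walks from a single source; since these manipulations live entirely on the auxiliary strips and never touch $\cS_i$, they survive the KPZ scaling limit and independent composition with $\cS_i$. Carrying them through, then sending the auxiliary drifts to $-\infty$ so that the single-source objects collapse onto the narrow wedges making up $(f,g)$, yields for each $i$ an identity of one-dimensional laws
$$
\mathrm{Law}\bigl(f\cdot\cS_i\cdot g\bigr)=\lim_{n\to\infty}\ \sum_j c^{(n)}_j\,\mathrm{Law}\bigl(F^{(n)}_j\cdot\cS_i\cdot G^{(n)}_j\bigr),
$$
with $F^{(n)}_j,G^{(n)}_j$ pointed edge processes independent of $\cS_i$ and real coefficients $c^{(n)}_j$ that do not depend on $i$.

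\textbf{Step 3: conclusion.} For a fixed law of $\cS_i$, the assignment sending the joint law of $(F,G)$ to the law of $F\cdot\cS_i\cdot G$ is affine and, again by the growth bound and bounded convergence, weakly continuous. By hypothesis each $\mathrm{Law}(F^{(n)}_j\cdot\cS_1\cdot G^{(n)}_j)=\mathrm{Law}(F^{(n)}_j\cdot\cS_2\cdot G^{(n)}_j)$, so the signed combinations agree for $i=1,2$ and, passing to the limit, so do $\mathrm{Law}(f\cdot\cS_1\cdot g)$ and $\mathrm{Law}(f\cdot\cS_2\cdot g)$ for every $(f,g)$ in the determining class of Step 1; hence $\cS_1$ and $\cS_2$ have the same distance laws. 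I expect the main obstacle to be the middle of Step 2: realizing the Dieker--Warren decomposition in the continuum, uniformly in the arbitrary environment $\cS_i$, and controlling both limits of the signed combinations — keeping the total variation $\sum_j|c^{(n)}_j|$ bounded and the mass near $\pm\infty$ tight, so that the limit of the combinations is the combination of the limits. Setting up in Step 1 a topology on upper semicontinuous functions on which metric composition and all these approximations are jointly continuous is routine but needs care, as does the reduction from general upper semicontinuous to continuous test functions.
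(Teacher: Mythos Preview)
Your Step 2 misremembers what the Dieker--Warren identity delivers. In the form used here (Theorem~\ref{t:mix}), it expresses the law of Brownian last passage from a randomized vector of \emph{heights} at a fixed time $a$ and common drift $\nu$ as a signed combination of \emph{parallel} edge processes --- same common drift, different starting heights. It does not produce pointed (single-source) edge processes, and there is no direct limiting procedure that collapses a pointed edge process started at a single time $a$ onto a maximum of narrow wedges sitting at \emph{different} spatial locations $z_1,\dots,z_k$: sending all drifts to $-\infty$ in $\ep(a,h\one,\nu)$ collapses everything onto the narrow wedge at $a$, not onto wedges elsewhere. This is precisely the obstacle you flag at the end but do not resolve.

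The paper's route around it differs from your sketch in two linked ways. First, instead of approximating $f$ by maxima of narrow wedges, it approximates a Lipschitz $f$ supported on an interval \emph{from above} by a Brownian last passage value $G_n$ across many lines with carefully chosen left boundary heights (Theorem~\ref{t:density}); Dieker--Warren then writes the law of $G_n$ as a signed combination of parallel edge processes. Second --- and this is the missing idea --- Brownian \emph{time inversion} $I_a$ swaps starting heights and drifts, sending parallel edge processes to pointed ones while acting on graphs by a reparametrization of $\R^2$ that tends to $(x,y)\mapsto(-x,y)$ as $a\to\infty$; thus $I_aG_n$ still converges to $f$, and its law is now a combination of pointed edge processes. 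A secondary issue: your narrow-wedge reduction in Step~1 invokes ``continuity of $\cS_i$'', but the hypothesis is only upper semicontinuity, under which $\max_i f(z_i)+\cS_i(z_i,y)$ need not recover $\sup_x f(x)+\cS_i(x,y)$ even for dense $z_i$. The paper avoids this by approximating $f,g$ from above and compactly on their supports, so that $\sup(F_n+\cS_i+G_n)\to\sup(f+\cS_i+g)$ follows without any continuity assumption on $\cS_i$.
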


We will use this characterization to prove the following. Here $\ncirc$ denotes the geometric analogue of metric composition $\cdot$ that changes over $n$ because of scaling.  See Section \ref{s:first} for details. Given a function $f:\R\to \R$, its line metric ${\mathfrak f}\in \uc(\R^2)$ is defined as
\begin{equation}\label{e:line metric}
{\mathfrak f}(x,y)=\begin{cases}f(y)-f(x)\qquad &x\le y,\\
-\infty &y<x.
\end{cases}
\end{equation}
In our two applications of the next theorem, the functions $B_{n,\nu}=B_\nu$ will be Brownian motions with drift $\nu$ and variance 2.

\begin{theorem}[Convergence theorem]\label{t:second}
Assume that we have a sequence of random upper semicontinuous functions $\cK_n:\mathbb R^2\to \mathbb R\cup\{-\infty\}$ and and a family of random functions $B_{\nu,n}$, with increments stochastically increasing in $\nu\in \R$, with line metrics \eqref{e:line metric} $\cB_{\nu,n}$ so that for every $\nu<\mu$ we have
\begin{enumerate}[(i)]
    \item Metric Burke property: with $\cR=\cK_n$ or $\cB_{\mu,n}$, we have $\cB_{\nu,n} \ncirc \cR\ed \cR \ncirc \cB_{\nu,n}$.
    \item Tightness in stationarity: with $\cR=\cK_n$ or $\cB_{\mu,n}$, we have $B_{\nu,n}\ncirc \cR \ed B_{\nu,n}+C$ with $C^+$ tight in $n$.
    \item Near-linearity: for every $\eps>0$, $\sup |B_{\nu,n}(x)-\nu x|-\eps|x|$  is tight in $n$.
    \item Brownian limit: $B_{\nu,n}(2x)-2\nu x$ converges compactly in law to standard two-sided Brownian motion.
    \item BBP asymptotics: for every $x,y\in \R$, and every vector $\nu_1,\ldots, \nu_k$ with a sufficiently small maximum
        $$
        \cB_{\nu_1,n} \ncirc \cdots \ncirc \cB_{\nu_k,n}\ncirc \cK_n(x,y) \cd \cB_{\nu_1} \cdots \cB_{\nu_k}\cdot \cS (x,y).
        $$
\end{enumerate}
Let $\cS$ be the Airy sheet. For every continuous initial condition $h$ with $\sup h(x)/(1+|x|)$ finite, we have
$$h\,\ncirc\, \cK_n \cd h\,\cdot \,\cS  \quad \mbox{ compactly. }$$
\end{theorem}

Theorem \ref{t:main} follows from Theorem \ref{t:second} after verifying the conditions, Section 6. The only tightness or limit theorems we rely on are the BBP convergence theorem of \cite{borodin2014free}, and the stationary tightness bounds  \cite{seppalainen2010bounds} and \cite{balazs2011fluctuation}.

\medskip

{\bf \noindent Consequences.}
Note that the precise statement of Theorem 1 makes sense but is false for narrow wedges; $\zero_0$ is indistinguishable from  $0$ by polymer models. But the right version of Theorem 1 holds for narrow wedges, see Section 5. Thus the top line of the Whittaker process or the KPZ line ensemble, see \cite{corwin2016kpz}, converges compactly in law to the top line of the Airy process. \cite{corwin2016kpz} have  strong tightness results for KPZ; we do not use them here.

The top Airy line limit of the KPZ equation and tightness of the KPZ line ensemble implies convergence to the Airy line ensemble, as shown in \cite{dimitrov2020characterization}.

The arguments in this paper are set up to work for discrete polymers as well -- essentially only the notion of $\supn$ needs to be changed to reflect discrete polymer composition. In fact, we allow changing test functions, which are not used for the two models we consider! The only missing part for log-gamma polymer models is that BBP marginals have not been shown. It seem that the technology is there. There are BBP marginal results for ASEP and the six-vertex models by \cite{aggarwal2019phase}, that are long to state, but the right interpretation should make the present analysis usable for those models as well.

Our method directly applies to all discrete last passage models where BBP convergence has been shown.

In the part II, we will use the methods introduced \cite{DOV} to show convergence to the directed landscape. It has to be modified, but for key parts useful KPZ versions have been worked out. The Airy sheet level information is contained in the KPZ line ensemble, which converges by our results and \cite{dimitrov2020characterization}. Such results go back to \cite{noumi2002tropical}, see also \cite{biane2005littelmann} and of \cite{corwin2020invariance}.

\medskip

{\bf \noindent Some notation.} $\uc$ is the space of upper semicountinous functions $\R\to\R\cup\{-\infty\}$, compactified at $-\infty$, with respect to local hypograph topology. It can be represented by mapping the hypograph of a function (including the values $-\infty$) through $(x,y)\mapsto (\arctan(x),\arctan(y)/(1+x^2))$ and using the Hausdorff metric on subsets of $(\pm\pi/2)\times \mathbb R$. Functions dominated by a fixed function are precompact in $\uc$. The space $\uc(\R^2)$ of functions from $\R^2$ is constructed similarly. A commonly used function in $\uc$ is $\zero_A$. It equals $0$ on the closed set $A$, and $-\infty$ elsewhere. The function $\zero_{a}=\zero_{\{a\}}$ is called the {\bf narrow wedge} at $a$. Brownian motions are assumed to have variance $2$ to comply with the Airy sheet convention. Compact convergence means uniform convergence on compact sets; compact convergence in law refers to weak convergence with respect to this topology.

\section{Last passage and edge processes}

\subsection*{Brownian last passage measures}

Let $a\in \mathbb R$, let $h=(h_1\le \dots \le h_n)$ be reals. Let $\blp_{a,h,\nu}$ denote the distribution of the Brownian last passage with drift vector $\nu$ on $[a, \infty)$ with initial condition vector $h$. That is, $\blp_{a,h,\nu}$ is the distribution of the random function
\begin{equation}\label{e:blp}
L(y) = \max_{1\le \ell \le n} h_\ell+\;\;\max_{t_{\ell-1}=a\le t_\ell\le \dots \le t_n}\;\;\sum_{i=\ell}^n B_i(t_i)-B_i(t_{i-1})
\end{equation}
when $y\ge a$ and $L(y)=-\infty$ when $y<a$, where the $B_i$ are independent copies of Brownian motion on $[a, \infty)$ with drift $\nu$.

\subsection*{Edge processes}

Some of the $\blp$ laws also arise as top lines of nonintersecting Brownian motions conditioned not to intersect.

Let $\ep_{a,h,\nu}$ denote the law of the {\bf top line} of Brownian motions, starting at time $a$ at the vector $h$ with drift vector $\nu$,  conditioned not to intersect forever. When $h, \nu$ are both strictly increasing, this event has positive probability. The law of the weakly increasing case can be obtained as a limit. We call these laws {\bf edge processes}. When all drifts are the same, we call the edge process {\bf parallel}. When all starting points are the same, we call the edge process {\bf pointed}.

It is well known that for pointed edge processes, $\ep_{a,h,\nu}=\blp_{a,h,\nu}$. When $h$ is not constant, this equality fails.  The relation of the two laws is most explicitly studied by \cite{dieker2008determinantal} in the discrete setting. We will take limits of a probabilistic consequence of their result to get
\begin{theorem}\label{t:mix}
Let $U_{i,j}$ be independent uniform$[0,1]$ random variables. Let $R_i=U_{i,1}+\ldots + U_{i,n-i}$. Let $\nu\in \R$, $h\in \R^n$, $r>0$ so that $h_i+rR_i$ is increasing in $i$ a.s. Let $S_i$ be independent Binomial$(n-i,1/2)$. Then
$$
E\,\blp(a,h+rR,\nu)=c\,E[(-1)^{S}\Delta(h/r +S)\,\,\ep(a,h+rS,\nu)], \qquad c=\frac{2^{\binom{n}{2}}}{1!2!\cdots (n-1)!}.
$$
\end{theorem}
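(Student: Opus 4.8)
The plan is to derive this as a diffusive (Donsker-type) scaling limit of an exact combinatorial identity for discrete last passage percolation, obtained via the RSK correspondence with Warren's chain and the determinantal kernels of \cite{dieker2008determinantal}.

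\emph{The discrete identity.} Through RSK, discrete last passage percolation from an integer initial vector $\tilde h$ is the top row of a Markov chain on interlacing integer arrays (``Warren's chain''), whose transition kernels \cite{dieker2008determinantal} evaluate explicitly as determinants. Keeping only the top level and summing out the $\binom{n}{2}$ lower interlacing coordinates against the alternating signs of the determinantal structure should produce an identity of the following shape: for a discrete last passage model (say, with Bernoulli increments) and its associated nonintersecting walks, and for a deterministic $\tilde h$ with gaps $\tilde h_{i+1}-\tilde h_i\ge(n-i)M$, the law of the last passage top line from $\tilde h$ equals a finite signed combination, over shift vectors $m=(m_1,\dots,m_n)$ with $0\le m_i\le n-i$ (mirroring the $n-i$ in $R_i$ and $S_i$), of the laws of the nonintersecting-walk top lines started from $\tilde h+Mm$, with $m$-th coefficient $(-1)^{m_1+\cdots+m_n}\prod_i\binom{n-i}{m_i}\,\Delta(\tilde h/M+m)$ times a normalizing constant. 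Collecting the binomials into $S_i\sim\operatorname{Binomial}(n-i,1/2)$, and using the classical duality between iterated uniform smoothing (which produces B-splines / truncated powers) and iterated binomial differencing, a short rearrangement should put this in the form
$$E\,\blp^{\mathrm{disc}}(a,\tilde h+\rho,\nu)=c^{\mathrm{disc}}\,E\!\left[(-1)^{S}\,\Delta(\tilde h/M+S)\,\ep^{\mathrm{disc}}(a,\tilde h+MS,\nu)\right],$$
where $\rho$ is the discrete randomizer whose $i$-th component is a sum of $n-i$ independent discrete uniforms on $\{0,\dots,M\}$; this is the exact prelimit of the claim.

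\emph{The limit.} Apply the identity with $N$ ``columns'' and increments normalized so that, after rescaling space by $N^{-1/2}$, the walks converge in law to variance-$2$ Brownian motions with drift $\nu$, choosing $M=M_N$ with $M_N N^{-1/2}\to r$ and $\tilde h=\tilde h_N$ with $\tilde h_N N^{-1/2}\to h$ (both $\rho$ and $M$ depend on $N$). Then discrete last passage converges to $\blp(a,\cdot,\nu)$ by the continuous mapping theorem for the last passage functional; the nonintersecting walks converge to nonintersecting Brownian motions, whose top line is $\ep(a,\cdot,\nu)$, using the definition of $\ep$ as a limit from strictly increasing data when a shifted vector is only weakly increasing (with the usual care to commute the non-intersection and scaling limits), and noting that a tie $h_i/r+S_i=h_j/r+S_j$ forces $\Delta(h/r+S)=0$ so that term simply drops. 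Moreover $N^{-1/2}\rho$, a sum of $n-i$ uniforms over a lattice of vanishing mesh and length $\to r$, converges in law to $rR$ with $R$ as in the statement; $N^{-1/2}(\tilde h_N+M_NS)\to h+rS$; the coefficients converge to $(-1)^S\Delta(h/r+S)$; and $c^{\mathrm{disc}}\to c$, the factor $2^{\binom{n}{2}}$ reflecting the variance-$2$ normalization across the $\binom{n}{2}$ Vandermonde factors and $1!\,2!\cdots(n-1)!$ the Karlin--McGregor / Weyl normalization of the conditioned walks.

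\emph{Passing to the limit, and the main difficulty.} To transfer the prelimit identity it suffices to test both (signed) measures against finitely many coordinates, i.e.\ against the multi-point distribution functions $(y_1,z_1,\dots,y_k,z_k)\mapsto P\big(X(y_1)\le z_1,\dots,X(y_k)\le z_k\big)$ of the random function $X$ in question, which determine the laws of $\blp$ and $\ep$ and any finite linear combination of them (measures on continuous functions on $[a,\infty)$). Since these lie in $[0,1]$ and the shift sum is finite, convergence of the discrete multi-point distributions together with convergence of the bounded coefficients yields the identity term by term, so the signs cause no integrability difficulty. I expect the real work to be the first step: reading off from \cite{dieker2008determinantal} the precise discrete identity — the exact shift range, the binomial and Vandermonde coefficients, and the constant $c^{\mathrm{disc}}$ — which requires careful bookkeeping of the RSK correspondence between last passage and the interlacing dynamics and of the sign-cancelling summation over the lower levels. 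Given that identity, the passage to the limit is a routine application of Donsker's theorem together with continuity of the last passage and non-intersection functionals.
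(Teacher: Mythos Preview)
Your proposal is correct and follows essentially the same route as the paper: establish a discrete analogue of the identity (the paper's Theorem~\ref{t:DW}) via the Dieker--Warren intertwining, then pass to the Brownian limit using Donsker's theorem and continuity of the last passage and nonintersection functionals. The one place your description differs in detail is the mechanism for the discrete step: rather than ``summing out lower interlacing coordinates,'' the paper works directly on $\mathbb Z^{\downarrow n}$ with the intertwining $PK=KQ$ and the explicit left inverse $M$ of $K$, and the key algebraic observation is that applying $M$ to the mass function of the discrete-uniform randomizer amounts to iterated finite differences $D_i^{n-i}$, which convert each uniform factor into the signed Bernoulli on $\{0,r\}$ and produce the Binomial $S$ together with the Vandermonde weight.
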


Here $\Delta$ is the Vandermonde determinant. Informally, the top line of Brownian last passage started from the random left initial condition $h+rR$  is a linear combination of parallel edge processes with a shared drift and starting points $h+rs$ as $s$ ranges over the finite support of the law of $S$.

Theorem \ref{t:mix} follows directly from a discrete version, a consequence of the work \cite{dieker2008determinantal}. Next, we will describe the part of their results relevant to us.

\subsection*{The discrete setting}

Following \cite{dieker2008determinantal}, Case B,  consider a family $\xi(k,t); k \in \{1, 2,...,n\},t\in \mathbb  N$ of independent Bernoulli$(p)$ random
variables.  Define the process $Y$ taking values in $$\mathbb Z^{\downarrow n}=\{(y_1,\ldots, y_n)\in \mathbb Z^n, y_1\ge \cdots \ge y_n\}$$
via the recursions $Y_1(t) = Y_1(t-1) + \xi(1,t)$
and
for $k = 2, 3,...,n,$
$$
Y_k(t) =\min Y_k(t- 1) + \xi(k,t) , Y_{k-1}(t).
$$
This gives a local description of a sign-changed last passage percolation.
Start $Y$ with initial conditions $-h$ at some integer time $a$.  Then for integers $t\ge a$ we have  $Y_{n}(t)=-L(t)$ as defined by the formula \eqref{e:blp} with
\begin{equation}\label{e:DWlp}
B_k(t)=\begin{cases} -\sum_{i=1}^{\lfloor t \rfloor}\xi(k,i) , &t\in \mathbb Z,
\\\mbox{ linear} & \mbox{on }  [i,i+1], i \in  \mathbb Z.
\end{cases}
\end{equation}
Dieker and Warren give an explicit description of the transition probabilities $Q^t$ of the process $Y$. They are given in terms of the intertwining relation with the process $Z$, a vector of $n$ Bernoulli$(p)$ random walks conditioned to stay in $\mathbb Z^{\downarrow  n}$  forever, the latter being a Markov chain with some transition probabilities $P^t$. Based on \cite{dieker2008determinantal}, we show

\begin{theorem}\label{t:DW}
Let $U_{i,j}$ be an array of independent random variables uniform on  $\{1,\ldots, r\}$. Let $R_i=U_{i,1}+\ldots + U_{i,n-i}$, let ${\bf y}\in \mathbb R^n$ so that the random initial condition $Y(0)={\bf y} + R$ has increasing coordinates a.s.

Let $S_i$ be independent Binomial$(n-i,1/2)$ and set the random initial condition  $Z(0)={\bf y}+r S$. Then for all measurable $A$ the processes $Z_n$ and $Y_n$ satisfy
\begin{equation}\label{e:DW}
P(Y_n\in A) = c\,E[(-1)^{S}
\Delta({\bf y}/r+S);\, Z_n\in A],
\qquad c=\frac{2^{\binom{n}{2}}}{1!2!\cdots (n-1)!}.
\end{equation}
\end{theorem}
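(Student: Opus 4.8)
The plan is to derive \eqref{e:DW} directly from the intertwining relation of Dieker and Warren between the process $Y$ and the conditioned walk $Z$. Recall that their result gives $Q^t = \Lambda^{-1} P^t \Lambda$ in the appropriate sense, where $\Lambda$ is a Markov kernel from $\Z^{\downarrow n}$ to itself built from the Vandermonde $\Delta$ and a product-form density; concretely, $\Lambda(y,\cdot)$ is (up to the normalizing constant $c^{-1}$) the law of $y' = y - s$ where the coordinates of $s$ are independent, $s_i$ having the distribution of a sum of $n-i$ independent uniforms on $\{1,\dots,r\}$ — exactly the law of $R$ above — reweighted by the Vandermonde ratio $\Delta(y')/\Delta(y)$. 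The first step is to unwind this: the key combinatorial identity is that the law of a sum of $n-i$ independent uniforms on $\{1,\dots,r\}$, times a signed Vandermonde factor, telescopes against a sum of $n-i$ independent $\mathrm{Bernoulli}(1/2)$ variables scaled by $r$, i.e. a $\mathrm{Binomial}(n-i,1/2)$ scaled by $r$. This is the discrete analogue of the statement that convolving the uniform density against itself relates to a Bernoulli random walk through the finite-difference operator whose $n$-fold antisymmetrization is $\Delta$.

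Second, I would set up the computation at the level of one-dimensional marginals of the last coordinate. Because $Y_n(t) = -L(t)$ and we only track $Y_n \in A$, we do not need the full kernel $\Lambda$ on $\Z^{\downarrow n}$, only how it acts after projecting onto the last coordinate. Starting $Y$ from the random initial condition ${\bf y} + R$ is the same as running $Q^t$ from the deterministic starting measure obtained by pushing a point mass at ${\bf y}$ through the "uniform" part of $\Lambda$; intertwining converts this into running $P^t$ from $\Lambda$ applied to that same measure, which after the telescoping identity of the first step is exactly the signed, Vandermonde-weighted law of $Z(0) = {\bf y} + rS$. Pushing forward by $P^t$ and reading off the last coordinate gives the right-hand side of \eqref{e:DW}. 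The normalizing constant $c = 2^{\binom n2}/(1!\,2!\cdots(n-1)!)$ is forced: $2^{\binom n2}$ is the total mass of the unsigned Binomial initial weights $\prod_i 2^{n-i}$, and $1!\,2!\cdots(n-1)!$ is the constant appearing in the Vandermonde normalization of $\Lambda$, so tracking both through the identity pins down $c$ with no freedom.

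The main obstacle I expect is bookkeeping around the \emph{signed} kernel. Dieker--Warren's $\Lambda$ is an honest (nonnegative) Markov kernel on the Weyl chamber, whereas after I replace $R$-type initial randomness by $S$-type initial randomness the Vandermonde weight $(-1)^S \Delta({\bf y}/r + S)$ is a signed measure; one has to check that all the manipulations — in particular commuting the (finite) expectation over $S$ past the application of $P^t$ and the event $\{Z_n \in A\}$, and justifying that $Z(0) = {\bf y}+rS$ indeed has strictly decreasing coordinates on the support of the signed weight so that $Z$ is well defined as the conditioned chain — are legitimate. The hypothesis that ${\bf y} + R$ has increasing coordinates a.s. is exactly what guarantees the analogous property for ${\bf y} + rS$, since the support of the scaled Binomial sits inside the support of the uniform sum. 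Once these positivity/support issues are dispatched, the rest is the telescoping identity and constant-tracking, both routine. Theorem \ref{t:mix} then follows by the diffusive scaling limit $r,n$ fixed, $p\to 0$ with the lattice rescaled so that \eqref{e:DWlp} converges to Brownian motions with drift $\nu$ and $\mathrm{Binomial}(n-i,1/2)$, $\mathrm{Uniform}\{1,\dots,r\}$-sums converge to their continuum counterparts $S_i$, $R_i$; the signed weights and the constant $c$ pass to the limit unchanged.
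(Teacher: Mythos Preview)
Your overall strategy is the paper's: use the Dieker--Warren intertwining $PK=KQ$ together with an explicit inverse $M=K^{-1}$ to write $P_y(Y_n\in A)=\sum_z M_{y,z}P_z(Z_n\in A)$, then average over the randomized initial point $y={\bf y}+R$ and show that the resulting signed measure on $z$ is the $(-1)^S\Delta({\bf y}/r+S)$--weighted Binomial law. But two of the steps you describe are not right as stated.

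First, your description of the intertwining kernel is off. You say $\Lambda(y,\cdot)$ is, up to a Vandermonde reweighting, the law of $y-s$ with $s_i$ a sum of $n-i$ independent uniforms on $\{1,\dots,r\}$. The Dieker--Warren kernel $K$ (and its tilt $\Lambda$) does not depend on $r$ at all; $r$ is a free parameter that the theorem introduces only through the choice of initial randomization. In the paper the computation goes the other way: one takes the explicit formula for $\Pi=\Lambda^{-1}$ from Dieker--Warren, rewrites left multiplication by $M$ as a product of finite differences $(hM)(z)=c'\,D_1^{n-1}\cdots D_{n-1}^1 h(z)\,\Delta(\hat z)$, and only then specializes $h$ to the product of uniform--convolution mass functions $\prod_i u^{*(n-i)}$. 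The identity $Du=(\one_0-\one_r)/r$ is what converts iterated uniforms into the signed Binomial, and it is a fact about the chosen $h$, not about $\Lambda$. Your ``uniform part of $\Lambda$'' picture, and the plan to push a point mass through it, does not correspond to anything in the actual kernel.

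Second, you pass from the intertwining on full configurations to a statement about the last coordinates $Y_n,Z_n$ by saying ``we only need how $\Lambda$ acts after projecting onto the last coordinate.'' This is not automatic: $PK=KQ$ by itself gives you nothing about marginals. The reason it works here is the RSK property that under the coupling $K$ one has $Z_n=Y_n$, equivalently $H_\ell K=KH_\ell$ for the coordinate indicators $H_\ell$. The paper uses exactly this commutation to get $KQH_{\ell_1}\cdots QH_{\ell_t}\bar 1=PH_{\ell_1}\cdots PH_{\ell_t}\bar 1$, which is what lets you read off path laws of $Y_n$ from path laws of $Z_n$ after applying $M$. Without identifying this step, your projection is a gap rather than a simplification. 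Your remarks on support (that ${\bf y}+rS$ stays ordered because its support sits inside that of ${\bf y}+R$) and on the constant $c$ are fine, and the passage to Theorem~\ref{t:mix} by a Donsker limit is indeed routine.
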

Again, the law of $Y_n$ is a finite linear combination of laws of $Z_n$ started deterministically.

\medskip
The proof of Theorem \ref{t:mix} based on Theorem \ref{t:DW} is a standard. The last passage functional is continuous in its parameters, the initial conditions $h_i$ and functions $B_i$ with respect to compact convergence. By scaling and subtracting the right drift, these converge in law to their continuum limit, trivially for the initial conditions and by Donsker's theorem for the paths $B$. So the last passage curve converges by the continuity theorem. Also, nonintersecting Bernoulli random walks converge to nonintersecting Brownian motions. Note the sign change as in \eqref{e:DWlp}.

\begin{proof}[Proof of Theorem \ref{t:DW}]
The intertwining relation is $PK=KQ$. Here  $K$ is an infinite transition probability matrix. It encodes couplings of $Y$ and $Z$ in the usual way: the conditional mass function of $Y$ given $Z=z$ is $K_{z,\cdot}$. Under these couplings one (and only one) entry is preserved: $Z_n=Y_n$, a standard property of RSK. This can be expressed with matrices as follows. With  $(H_\ell)_{w,w'}=\one_{w=w',w_n=\ell}$,
$$
(H_\ell K)_{z,y}=P(Z=z,Z_n=\ell,Y=y)=P(Z=z,Y_n=\ell,Y=y)=(K H_\ell)_{z,y}
$$
that is, $K$ and $H_\ell$ commute for all $\ell$. The commutation relations and $K\bar 1=\bar 1$ for the all-1 vector yield
$$
KQH_{\ell_1}QH_{\ell_2}\cdots QH_{\ell_t}\bar 1=
PH_{\ell_1}PH_{\ell_2}\cdots PH_{\ell_t}\bar 1.
$$
We use this in the formula that a Markov chain is in given sets at times $1,\ldots ,t$:
\begin{align}\nonumber
P_{y_0}(Y_n(i)=\ell_i,\,i=1,\ldots,t)&=(QH_{\ell_1}QH_{\ell_2}\cdots PH_{\ell_t}\bar 1)_{y_0},
\\ \nonumber
P_{z_0}(Z_n(i)=\ell_i,\,i=1,\ldots,t)&=(PH_{\ell_1}PH_{\ell_2}\cdots QH_{\ell_t}\bar 1)_{z_0}.
\end{align}
\cite{dieker2008determinantal} give an explicit left/right inverse $M$ of $K$, for which we showed
$$
P_y(Y_n(\cdot)\in A) = \sum_{z}M_{y,z}P_z(Z_n(\cdot)\in A).
$$
$M$ typically has negative entries, so this is not a mixture, just a linear combination. Let  $\hat z=(z_1-1,\ldots, z_n-n)$. Let
$$s_z=\prod_{1\le i<j\le n}
 \frac{z_i-z_j + j-i}{j-i}
 =c'\,\Delta(\hat z)
 \,(-1)^{\binom{n}{2}}, \qquad c'=\frac{1}{1!2!\cdots(n-1)!}
$$
be the Schur polynomial evaluated at (1,\ldots, 1), where $\Delta$ is the Vandermonde determinant. Let $p^z=p^{z_1+\ldots+z_n}$. In equation (5) \cite{dieker2008determinantal} introduce the matrix $\Lambda_{z,y}=s_zp^zK_{z,y}p^{-y}$, see also the formula for $K$ on p 1169.
They show that $\Lambda$ is invertible with left/right inverse $\Pi$. In the first and second displayed formulas on p 1171 they show that for any function $f(z):\Z^{\downarrow n}\to \mathbb R$ we have
$$
\sum_{y\in \Z^{\downarrow n}}f({\hat y})\Pi_{yz}=\sum_{\ell_1,\ldots, \ell_n\in \mathbb Z}(-1)^{\ell_1+\cdots +\ell_n}\binom{n-1}{\ell_1}\cdots \binom{1}{\ell_{n-1}}p^\ell f(\hat z+\ell)
$$
Now $\Pi_{yz}=p^yM_{yz}p^{-z}/s_z$. When applying this to the function $h(y)=f(\hat y)p^y$, the $p$ terms cancel and
$$
hM(z)=\sum_{\ell_1,\ldots, \ell_n\in \mathbb Z}(-1)^{\ell_1+\cdots +\ell_n}\binom{n-1}{\ell_1}\cdots \binom{1}{\ell_{n-1}}h(z+\ell) s_z,
$$
where $h$ is applied to vectors $y$ so that $\hat y\in \Z^{\downarrow n}$.
For any  $g:\mathbb Z\to \mathbb R$  the binomial theorem gives
$$
\sum_\ell \binom{n}{\ell}(-1)^{\ell} g(j+\ell)=\sum_{i_1,\dots, i_n=0}^{1}(-1)^{i_1+\ldots+i_n}g(j+i_1+\cdots+ i_n)=(-D)^n g(j)
$$
where $D g(j)=g(j+1)-g(j)$, so in our case
\begin{equation}\label{e:Dh}
hM(z)=c' \,D_1^{n-1} \cdots D_{n-1}^1\,h(z)\Delta(\hat z).
\end{equation}
Its continuum limit is the same with each $D_i$ replaced by $2\partial_i$, but we will not use this.

To check \eqref{e:DW}, take $h$ to be the mass function of $S$ in \eqref{e:Dh}. Using convolution powers of the mass function  $u=\one_{\{1,\ldots,r\}}/r$ of $U_{ij}$ we have
$$
h(z)=\prod_{i=1}^n u^{*(n-i)}(z_i), \qquad  D_1^{n-1} \cdots D_{n-1}^1\,h(z) =\prod_{i=1}^n (Du)^{*(n-i)}(z_i),
$$
since $D$ commutes with convolution.  $Du=(\one_0-\one_r)/r$ which can be compared to the Bernoulli$(1/2)$ distribution on $\{0,r\}$.
Equation \eqref{e:DW} now  follows from \eqref{e:Dh}, after we hide the factor $r^{-\binom{n}{2}}$ in $\Delta$.
\end{proof}
\section{The characterization theorem}

The characterization theorem is based on the fact that finite linear combinations of  pointed edge process laws are dense in the space of probability measures on $\uc$. A more  precise variant of this is the following.

\begin{theorem} \label{t:density}
Let  $f\in\uc$ be Lipschitz continuous on a compact interval $A$ and $-\infty$ outside. There exists a random sequence of functions $F_n$ in $\uc$ so that as $n\to\infty$, a.s.
\begin{enumerate}[(i)]
\item $F_n \to f$ in  $\uc$ from above,   and compactly on $A$
\item $\limsup\sup_x F_n(x) +b|x|<c_b,$  a deterministic constant for every $b\ge 0$,
\item the law of $F_n$ is a finite linear combination
of pointed edge processes started at the same point $-a_n$.
\end{enumerate}
\end{theorem}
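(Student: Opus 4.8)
\medskip\noindent\textbf{Proof plan.}\quad The plan is to produce $F_n$ as a rescaled Brownian last passage / edge-process function whose law is, by Theorem~\ref{t:mix}, automatically a finite linear combination of pointed edge processes with a common starting point, and which concentrates on $f$ because at the macroscopic scale its Tracy--Widom/BBP-order fluctuations are negligible against the $\sqrt{\phantom{n}}$-order profile. Concretely, I would first write down, for a drift vector $\nu=\nu^{(n)}$ with prescribed empirical distribution and the randomized left boundary $h^{(n)}+r_nR$ of Theorem~\ref{t:mix}, the law-of-large-numbers shape of the relevant last passage functional after the affine rescaling built into $\ncirc$; this is an explicit variational expression in the drift measure and the rescaled boundary, and the $O(n^{1/6})$-scale fluctuation around it is controlled by one-point upper and lower tail bounds for last passage together with a chaining/Kolmogorov continuity estimate, so that the concentration is uniform on the compact interval $A$. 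Since $F_n$ must be an honest random function for ``$F_n\to f$ a.s.'' to make sense, I keep it equal to $\blp_{-a_n,\,h^{(n)}+r_nR,\,\nu^{(n)}}$ (suitably rescaled) throughout, and only use Theorem~\ref{t:mix} to \emph{identify} its law; note that in the composition $f\cdot\cS$ the parabola inside the Airy sheet cuts off the contribution of $F_n$ away from $A$, so $F_n$ need not itself vanish off $A$, only satisfy the growth bound (ii).

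Next I would invert this correspondence. Given $f$, Lipschitz on $A$ and $-\infty$ off $A$, approximate $f$ from above on $A$ by functions $f^{(\delta)}$ with the same Lipschitz bound lying in the range of the variational map, solve for drift vectors, boundary data, scales and centering that realize $f^{(\delta)}$ as the macroscopic shape (with parameters chosen so the shape also lies below any fixed linear function once $|x|$ is large), and then pass to a diagonal sequence with $\delta=\delta_n\to0$. Borel--Cantelli applied to the tail bounds upgrades the concentration to almost sure compact convergence on $A$; arranging the macroscopic shape to dominate $f^{(\delta)}$ up to a deterministic $o(1)$ correction gives convergence from above; and (ii) follows because the chosen drifts and rescaling force the macroscopic shape below $b|x|$ outside a fixed window for each $b\ge0$, so a one-point upper tail bound, a union bound over a net, monotonicity of last passage in its endpoints, and Borel--Cantelli produce a deterministic $c_b$ with $\limsup_n\sup_x\big(F_n(x)+b|x|\big)<c_b$ a.s. Finally (iii) is read off from Theorem~\ref{t:mix} (equivalently the discrete Theorem~\ref{t:DW} together with the convergence argument preceding it), using that an affine rescaling of a pointed edge process is again a pointed edge process with rescaled parameters, so the affine image of the right-hand side of Theorem~\ref{t:mix} is still a finite linear combination of pointed edge processes started at one point.

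The main obstacle is the inversion step: a general Lipschitz $f$ need not be concave or monotone, whereas each individual last passage shape is rigid, so matching $f$ genuinely uses the \emph{signed} combination of Theorem~\ref{t:mix}, and one must resist expanding $F_n$ into that combination, whose coefficients $(-1)^S\Delta(\cdot)$ built from a Vandermonde determinant blow up. Thus the real work is to make the concentration uniform enough on $A$ to land on a deterministic function while simultaneously controlling the approach from above and the sub-linear growth (ii); the rest --- density of Lipschitz-on-compact functions in $\uc$, precompactness of families dominated by a fixed function, continuity of metric composition under convergence from above with controlled growth, and Borel--Cantelli --- is soft.
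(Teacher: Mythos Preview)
There is a genuine gap in your reading of (iii). Theorem~\ref{t:mix} does \emph{not} output pointed edge processes: its right-hand side is a signed combination of laws $\ep(a,h+rS,\nu)$, and the height vector $h+rS$ has $i$th coordinate $h_i+rS_i$ with $S_i\sim\mathrm{Binomial}(n-i,\tfrac12)$, which differs across coordinates almost surely. Even if you take $h$ constant these processes are not pointed; with a constant scalar drift $\nu$ they are \emph{parallel}, and with your varying $\nu^{(n)}$ they are neither. An affine rescaling of a non-pointed edge process is still non-pointed, so ``reading off (iii) from Theorem~\ref{t:mix}'' does not work.

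The paper supplies the missing idea: \emph{time inversion}. First, $f$ is encoded not through a drift profile but through a \emph{height} vector with a single common drift. Proposition~\ref{p:wedge} shows that last passage $\Lambda_{n,m}$ across the top $n-m$ lines concentrates near a narrow wedge at location $-am/n$, so setting $h_{n,m}\approx f(am/n)$ makes the last passage curve $H_n(-x)\approx f(x)$; this is much more concrete than a hydrodynamic inversion for an empirical drift measure. After the randomization of the heights, Theorem~\ref{t:mix} produces a finite linear combination of \emph{parallel} edge processes. The crucial extra step is to apply the operator
\[
I_a g(t)=\frac{t+a}{a}\,g\!\left(\frac{-at}{t+a}\right),\qquad t>-a,
\]
a shift-and-scale conjugate of classical Brownian time inversion, which swaps starting heights and drifts and hence maps parallel edge processes to pointed ones with the same starting time $-a_n$. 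Since $I_{a}$ converges to the reflection $(x,y)\mapsto(-x,y)$ on compacts as $a\to\infty$, the approximation $I_aH_n\to f$ and the linear growth bounds (ii) survive the transformation. Without this inversion step your construction cannot deliver pointed laws, regardless of how the drift vector is chosen.
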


The proof of this theorem consists of four steps. First, we approximate $f$ using Brownian last passage percolation started from some left initial condition at time $-a_n$. We then randomize these initial conditions slightly to set up for the next step. Next, we represent the approximation as a finite linear combination of parallel edge processes. Finally, we use time inversion to transform them to pointed edge processes.

Let $W_{n}$ be the top line of $n$ nonintersecting Brownian motions. We will use that
$$
W_{n}(a+t)-(2a+t)\sqrt{2n/a}$$
is close to the narrow wedge $\zero_0$. We first show this in a standardized setting.
\begin{proposition}\label{p:wedge} Let $W_n'$ be the top curve of {\bf standard }nonintersecting Brownian motions,  let $\alpha\in(-1/6,1)$, and let  $\Lambda_n'(t) = W'_n(t)-\sqrt{n}(t+1)$. Then for some $c,c',d,\delta>0$ and all large $n$ with probability at least $1-ce^{-dn^\delta}$,
for all $0\le h\le c'n^{1/2}$
we have
$$
-\sqrt{n}(t-1)^2- n^{\alpha}\le \Lambda'_n(t), \quad t\in [1/2, 3/2], \qquad
\Lambda'_n(t)\le n^{\alpha}+ h^2-\tfrac{2}{3}|t-1|h n^{1/4}, \quad t\ge 0.
$$
%
\end{proposition}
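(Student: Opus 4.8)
The plan is to obtain both inequalities from three inputs: the exact one-point law of the top curve $W_n'$, the Brownian Gibbs (local absolute continuity) property that governs its spatial regularity, and a union bound over a polynomial net. Observe first that the quantities $h^2-\tfrac{2}{3}|t-1|hn^{1/4}$ are deterministic, so it suffices to prove the upper bound for the worst admissible $h$ at each $t$; in particular no probability is spent on $h$, and for $t$ near $1$ the inequality to establish reduces (minimizing over $h$) to $\Lambda_n'(t)\le n^\alpha-\tfrac{1}{9}(t-1)^2n^{1/2}$, while for $t$ away from $1$ it becomes a line in $|t-1|$ with a steep negative slope.

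\emph{One-point estimates.} By Brownian scaling $W_n'(ct)\ed\sqrt c\,W_n'(t)$ as processes, and by the Gravner--Tracy--Widom / Baryshnikov identity the marginal $W_n'(1)$ is the top eigenvalue $\lambda_{\max}(\operatorname{GUE}_n)$ in the relevant normalization; hence $W_n'(t)\ed\sqrt t\,\lambda_{\max}(\operatorname{GUE}_n)$ for each fixed $t$. The classical edge tail bounds for $\lambda_{\max}(\operatorname{GUE}_n)$, uniform in $n$ --- roughly $P(\lambda_{\max}>2\sqrt n+n^{-1/6}s)\le Ce^{-cs^{3/2}}$ and $P(\lambda_{\max}<2\sqrt n-n^{-1/6}s)\le Ce^{-cs^{3}}$ in the polynomial range of $s$, and stretched-exponential beyond --- applied with $s\asymp n^{\alpha+1/6}$ (a positive power of $n$, since $\alpha>-1/6$) yield, for each fixed $t$ in a bounded range, that $|W_n'(t)-2\sqrt{nt}|\le\tfrac12\sqrt t\,n^\alpha$ off an event of probability at most $Ce^{-dn^\delta}$ for some $\delta=\delta(\alpha)>0$. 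Since $2\sqrt{nt}-\sqrt n(t+1)=-\sqrt n(\sqrt t-1)^2$ and $\tfrac{1}{9}(t-1)^2\le(\sqrt t-1)^2\le(t-1)^2$ on $[1/2,3/2]$, these are exactly the two claimed bounds at a single point.

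\emph{Uniformization and the far region.} On a compact interval the top curve satisfies the Brownian Gibbs property: conditionally on the second curve and on its endpoint values, $W_n'$ is a Brownian bridge conditioned to stay above the second curve. Running this on short, Airy-scale subintervals --- where the gap between the first two curves is comparable to their fluctuations, so the conditioning has order-one probability --- produces a modulus of continuity for $W_n'$ comparable to Brownian motion's, valid off an event of stretched-exponential probability. It then suffices to check the one-point bounds on a net of mesh $n^{-10}$ covering $[1/2,3/2]$: the modulus correction over a cell is $O(n^{-5}\sqrt{\log n})=o(n^\alpha)$ since $\alpha>-1/6$, and the union bound over $O(n^{10})$ points costs only a polynomial factor, absorbed by the stretched-exponential rate. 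For $t$ outside a fixed window the parabola is irrelevant and only the very negative $h$-wedge matters; here a crude bound such as $W_n'(t)\le\sum_{i=1}^n(\sup_{[0,t]}B_i-\inf_{[0,t]}B_i)$, decomposed dyadically and estimated by the reflection principle on each block, gives after summation the required stretched-exponential tail, using that $\sqrt n(t+1)$ dominates. Intersecting the lower-bound event on $[1/2,3/2]$, the compact upper-bound event, and the far-field event --- each of probability $\ge1-Ce^{-dn^\delta}$ --- and taking $\delta$ to be the smallest of the exponents (still positive because $\alpha>-1/6$) gives the proposition.

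\emph{Main obstacle.} The delicate point is the uniformization: transferring the one-point edge estimates to a statement uniform in $t$ at the stretched-exponential rate $e^{-dn^\delta}$ rather than merely a polynomial one. Brownian Gibbs is the natural tool, but it must be applied on intervals short enough that the conditioning event is not exponentially unlikely, while the net covering $[1/2,3/2]$ must be fine enough that the modulus correction is $o(n^\alpha)$; reconciling these requirements with the edge-tail exponents is exactly what pins down the admissible range of $\alpha$ and the powers of $n$ in the statement.
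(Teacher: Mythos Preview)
Your lower-bound strategy --- one-point GUE tail at a polynomial net, plus a Brownian-Gibbs modulus-of-continuity estimate to interpolate --- is exactly what the paper does; it simply quotes the ingredients (the small-deviation bound of Ledoux--Rider and a modulus bound from \cite{DV}) rather than rederiving them.

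The upper bound is where your argument breaks. The ``crude bound'' $W_n'(t)\le \sum_{i=1}^n(\sup_{[0,t]}B_i-\inf_{[0,t]}B_i)$ has size of order $n\sqrt t$, not $\sqrt n\,t$: each summand has mean $c\sqrt t$, so the sum is about $cn\sqrt t$, which for $t$ in any bounded or even polynomially growing window is far larger than $\sqrt n(t+1)$. Thus the assertion ``$\sqrt n(t+1)$ dominates'' is false except when $t\gtrsim n$, and the far region (indeed, everything beyond a fixed compact) is left uncovered. Your compact-interval argument alone cannot fill this, because after minimizing over $h$ the target inequality is $\Lambda_n'(t)\le n^\alpha-\tfrac19(t-1)^2\sqrt n$ on a $t$-window of width of order $n^{1/4}$, not order $1$.

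The paper circumvents this entirely. It invokes a uniform-in-$t$ envelope (Proposition~4.3 of \cite{DV}) giving, with the required probability, $\Lambda_n'(t)\le w_n(t):=2\sqrt{nt}-\sqrt n(t+1)+n^\alpha(1+t^{2/3})$ for all $t\ge 0$. Since $w_n$ is concave, every tangent line to $w_n$ is a global upper bound; tangent lines with slope $\ge -c'\sqrt n$ touch $w_n$ in $[0,2]$, where $w_n\le v_n:=n^\alpha-\tfrac19\sqrt n(t-1)^2$, so the tangent lines to the parabola $v_n$ with those slopes dominate $w_n$ as well. A direct computation shows that these tangent lines are precisely the wedges $n^\alpha+h^2-\tfrac23|t-1|hn^{1/4}$, which yields the whole $h$-family at once. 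What you are missing is not the one-point or modulus input but this global concave envelope and the tangent-line conversion.
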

\begin{proof} Proposition 4.3 of \cite{DV} implies that with the above high probability,
$$
W_n'-\sqrt{n}(t+1)\le w_n(t), \quad w_n(t) =2\sqrt{nt} -\sqrt{n}(t+1)+ n^{\alpha}(1+t^{2/3}), \quad   t\ge 0 $$
The curve $w_n$ is concave, and $\partial_t w_n(2)\ge -c'\sqrt{n}$. By monotonicity, all tangent lines to $w_n$ with slopes at least than  $-c'\sqrt{n}$ will have to hit the curve in $[0,2]$. But on $[0,2]$, $w_n(t)\le v_n(t)=-\sqrt{n}(t-1)^2/9+ n^{\alpha}$. So all tangent lines to $v_n$ with slopes at least $-c'\sqrt{n}$ are upper bounds for $\Lambda_n'$. The second claim follows after computing these lines.

In the first claim, the value of the bounding curve is at least $n^{\alpha}$ below the typical location $2\sqrt{nt}-\sqrt{n}(t+1)$. A single point bound using \cite{ledoux2010small}, (see also Theorem 3.1 in \cite{DV}), applied
at a polynomial number of periodic points, and the modulus-of-continuity bound, Corollary 4.2. in  \cite{DV} gives the first claimed inequality.
\end{proof}

\begin{proof}[Proof of Theorem \ref{t:density}]
Without loss of generality, assume that $A=[0,b]$, and let $\ell$ be the Lipschitz constant of $f$. We first set up left initial conditions for Brownian last passage across $n$ lines so that the last passage values to the top line approximate $f(-\cdot)$.

The starting location will be $-a=-a_n$, so the typical curve for last passage across driftless Brownian motions of variance 2 is $\sqrt{8n(x+a)}$. The drift we use is the negative of the slope $\sqrt{2n/a}$ at $0$, which creates a curve close to the downwards parabola $\sqrt{8na}-x^2\sqrt{n/(2a)^3}$. Heuristically, if $n/a^3\to \infty$,  this solves the first step of the problem for $f=\zero_0$.

Let $\Lambda_{n,m}(x)$ be the last passage through the top $n-m$ lines. This has typical value $$\lambda_{n,m}(x)=\sqrt{8(n-m)(x+a)}-x\sqrt{2n/a}.$$
The curve $\lambda_{n,m}$ takes its maximum $\sqrt{8an}-m\sqrt{2a/n}$ at $-am/n$. When $m$ is small, it also behaves like a downwards parabola $-x^2\sqrt{n/(2a)^3}$ nearby. So we can use $\Lambda_{n,m}$ as a narrow at this location.

Note that $\Lambda_{n,m}$ has the same law as a shifted and scaled, but not tilted version of $\Lambda'$ considered in Proposition \ref{p:wedge}. The scaling is Brownian, except the factor-of-two speedup from standard Brownian motion.  For the argument any sequence $a_n=n^\gamma$ with $\gamma\in (0,1/3)$ would work; we choose $a=a_n=n^{1/4}$. Let $m_n=\lfloor bn/a\rfloor$.
A Borel-Cantelli argument with Proposition \ref{p:wedge} gives that for all large $n$ and all $m=0,\ldots, m_n$, and with the shorthand $y_m=-am/n$ we have
\begin{equation}\label{e:Lambdanm}
-d_n+\zero_{[\pm\eps_n]}(x-y_m)\le  \Lambda_{n,m}(x) -\sqrt{8an}+m\sqrt{2a/n}\le d_n+c_n|x-y_m|
\end{equation}
where $c_n\to\infty, d_n\to 0$ are deterministic sequences, and $\eps_n=a/n$.
For integers $m=0,\ldots ,m_n$ we set the left initial conditions  $$f_{n,m}=f(am/n)-\sqrt{8an}+m\sqrt{2a/n}+d_n+\eps_n/\ell$$

If $n/a\ge 2 \ell^2$ then for all $m\in[1,bn/a]$ we have $f_{n,m+1}\ge f_{n,m}+\sqrt{a/(2n)}$, in particular $f_{n,m}$ is increasing in $m$. We write the Brownian last passage value as
$$
H_n(x)=\max_{m=1}^{m_n}\,f_{n,m}+\Lambda_{n,m}(x)
$$
and by \eqref{e:Lambdanm} we have
$$
\max_{m=1}^{m_n}\zero_{[\pm\eps_n]}(x-y_m) +f(-y_m)+\eps_n/\ell \le H_n(x) \le 2d_n+\max_{i={1,\ldots m_n}\atop y=ma/n}f(-y_m)-c_n|y_m-x|.
$$
When $c_n\ge \ell$ this implies the following with
$$
f(-x)+\zero_{[0,b]}(-x)\le H_n(x)\le \bar f_n(-x)$$
where
$$
\bar f_n(x)=2d_n+\eps_n/\ell+
 \begin{cases}
  f(x), &x\in (-b,0),\\
  -c_n |x|+\max  f, & x\le0, \\
  -c_n |x-b| + \max  f, & x\ge b .
 \end{cases}
$$
The upper bound implies that for every $\beta<c_n$, we have
$
H_n(x)\le -\beta|x|+\beta b
+{\max} f
$
so $H_n(-\cdot)$ satisfies the claims (i)-(ii) of the theorem.

Next, we modify the initial conditions $f_{n,m}$ slightly; namely for $\delta$ small we set
$$
F_{n,m}=\begin{cases}
f_{n,m}+R_m, & m\le m_n,\\
f_{n,m_n} +\eps m + R_m, & m\ge m_n.
\end{cases}
$$
where $R_m$ are independent and are $n-m$-fold convolutions of Uniform $[0,\delta]$ random variables. We have
$$
H_n(x)\le G_n(x):=\max_{m=1}^n F_{n,m}+\lambda_{n,m}(x) \le H_n(x) + \delta n + \max_{m=1}^{n} R_n \le H_n(x) + 2\delta n
$$
where we used that $\lambda_{n,m}$ is nonincreasing in $m$. We choose $\delta=\delta_n$ small enough so that $\delta_n n \to 0$ with $n$ and that $F_{n,m}$ is almost surely increasing in $m$. By Theorem \ref{t:mix}, the law of $G_n$ is a finite linear combination of parallel edge processes.

The classical time inversion takes a function $g:(0,\infty) \to \mathbb R$ to $t g(1/t)$. It preserves the law of Brownian motion, see Theorem 1.9 \cite{morters2010brownian}. It takes the linear function $\nu t+\mu$ to linear function $\mu t+\nu$. Since time inversion is a linear operator, it follows that it takes Brownian motion started at $\mu$ with drift $\nu$ to Brownian motion started at $\nu$ with drift $\mu$.

Brownian motions with increasing initial condition  vector $\mu$ and increasing drift vector $\nu$ have a positive probability of never intersecting. Applying time inversion to this ensemble we see that $\mu$ and $\nu$ get switched. Taking limits, we see that the same holds even when $\mu,\nu$ are merely nondecreasing.

For functions on $(-a,\infty)$ the time inversion operator
$$
I_ag (t)=\frac{t+a}{a}g\left(-\frac{at}{t+a}\right), \qquad t\in (-a,\infty).
$$
is just a conjugate of the classical time inversion by shift and Brownian scaling. $I_a$ transforms graphs of functions by reparametrizing the underlying set $\mathbb R^2$. For fixed compact sets this reparametrization converges to $(x,y)\mapsto (-x,y)$ uniformly as $a\to\infty$. Using this it is easy to check that $I_af(-\cdot)\to f$ and also $I_aG_n\to f$ in $\uc$ and compactly on $A$. This shows (i).

$I_a$ maps the set of linear functions to itself. It fixes the zero intercept $d$ and maps the slope $\nu\mapsto d/a-\nu$. Since $a\to\infty$, and $G_n\le d-\beta|x|$, we see that $F_n\le d-\beta|x|/2$ for all large $n$, showing (ii).

$I_a$ maps parallel edge processes to pointed ones, with $(h,\nu)\mapsto (a\nu,h/a)$ showing (iii).
\end{proof}

Let $\uc_0$ be the space of $\uc$ that are $-\infty$ outside some compact set. The following lemma will be used to remove the Lipschitz and $\uc_0$ conditions needed in Theorem \ref{t:density}.

\begin{lemma} \label{l:usc0} Let $\cS_1,\cS_2: \mathbb R^2 \mapsto \mathbb R\cup \{-\infty\}$ be random upper semicontinuous functions.

If $\cS_1,\cS_2$ have the same distance laws on  $\uc_0^2$ then they also
have the same distance laws on $\uc^2$. This also holds if $\uc_0$ is replaced by functions that are supported on an interval and are Lipschitz there.
\end{lemma}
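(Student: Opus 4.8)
The plan is to show that the distance law $\mathrm{Law}(f\cdot\cS\cdot g)$ for general $f,g\in\uc$ can be recovered as a limit of distance laws $\mathrm{Law}(f_k\cdot\cS\cdot g_k)$ with $f_k,g_k$ in the restricted class, together with a coupling/monotonicity argument that makes the limit identify the two laws. First I would reduce to approximating $f$ and $g$ separately: by symmetry and the bilinear structure of $(f,g)\mapsto f\cdot\cS\cdot g$ it suffices to show that if $f_k\to f$ in $\uc$ from above (i.e. $f_k\ge f_{k+1}\ge f$ with $f_k\to f$ pointwise/in $\uc$) then $f_k\cdot\cS\cdot g\to f\cdot\cS\cdot g$ a.s., and similarly in the second slot. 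The natural choice is $f_k=f$ on $[-k,k]$ and $f_k(x)=f(x)\wedge(M_k-\beta|x|)$ or simply a Lipschitz truncation: take $f_k$ to agree with $f$ on a compact interval growing to $\R$, cap it below a steep linear profile outside, and if the Lipschitz version is wanted, replace $f$ on that interval by its sup-convolution with a large-slope tent so it becomes Lipschitz while still dominating $f$ and converging to it. The growth hypothesis $\sup_u\cS(u)/(1+\|u\|)<\infty$ ensures that in $\sup_x [f(x)+\cS(x,y)]$ only a (random, a.s.\ finite) compact range of $x$ matters, because $f(x)\le c+b|x|$ and $\cS$ grows sublinearly, so the sup over $|x|>$ some $\cR(\omega)$ contributes $-\infty$-ishly little; hence for $k\ge\cR(\omega)$ one has $f_k(x)+\cS(x,y)=f(x)+\cS(x,y)$ on the relevant region and $f_k\cdot\cS=f\cdot\cS$ exactly on compacts, then composing with $g$ (same argument on the other side) gives $f_k\cdot\cS\cdot g=f\cdot\cS\cdot g$ for all large $k$, almost surely.

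Concretely the steps are: (1) fix $f,g\in\uc$ and, using the linear growth bound built into membership of $\uc$ (functions dominated by $c+b|x|$) together with the stated sublinear growth of $\cS_i$, show that there is an a.s.\ finite radius $\cR_i(\omega)$ outside which points do not achieve the relevant suprema defining $f\cdot\cS_i$ and $(f\cdot\cS_i)\cdot g$, uniformly for $y$ in any fixed compact set; (2) define $f_k,g_k$ as truncations/Lipschitz regularizations supported on $[-k,k]$ and dominating $f,g$ from above with $f_k\downarrow f$, $g_k\downarrow g$ in $\uc$; (3) observe that on the event $\{k\ge\cR_i\}$ the composition is unchanged, so $f_k\cdot\cS_i\cdot g_k=f\cdot\cS_i\cdot g$ a.s.\ for $k$ large, hence $f_k\cdot\cS_i\cdot g_k\to f\cdot\cS_i\cdot g$ a.s.; (4) since $f_k,g_k$ lie in the class on which $\cS_1,\cS_2$ are assumed to have the same distance laws, $f_k\cdot\cS_1\cdot g_k\ed f_k\cdot\cS_2\cdot g_k$ for every $k$; (5) pass to the limit in distribution using a.s.\ convergence (Portmanteau) to conclude $f\cdot\cS_1\cdot g\ed f\cdot\cS_2\cdot g$. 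The final sentence of the lemma — that one may take the approximating class to be ``Lipschitz on an interval, $-\infty$ outside'' rather than merely $\uc_0$ — is handled by inserting the sup-convolution regularization in step (2): Lipschitzifying from above does not spoil domination or $\uc$-convergence, and on compacts the regularized function still coincides with $f$ in the limit.

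I expect the main obstacle to be step (1): making rigorous that ``far-away points don't matter,'' uniformly enough that the composition is \emph{exactly} unchanged (not just close) on a compact set, and doing this for the two-fold composition $f\cdot\cS\cdot g$ where the intermediate function $f\cdot\cS$ is itself random with its own (controlled) growth. One must check that $f\cdot\cS$ again satisfies a linear upper bound with a random but finite constant — this follows by combining $f(x)\le c+b|x|$ with $\cS(x,y)\le C(\omega)(1+\|(x,y)\|)$ — so that the second composition with $g$ is governed by the same kind of localization argument; and one must be careful that the cap $M_k-\beta|x|$ used outside $[-k,k]$ is steep enough (large $\beta$, not just large $M_k$) that it genuinely never wins the supremum against $\cS$ plus the honest values of $f$ near the maximizer, which is where the hypothesis $\sup\cS/(1+\|u\|)<\infty$ (rather than mere finiteness) is essential. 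Once the exact-coincidence-on-compacts statement is in hand, the distributional limit and the transfer from $\uc_0$ (or the Lipschitz class) to all of $\uc$ are routine.
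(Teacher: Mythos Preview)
Your approach has a genuine gap in the reduction from $\uc_0$ to $\uc$. You propose to approximate $f\in\uc$ \emph{from above} by $f_k\in\uc_0$, but this is impossible: a function in $\uc_0$ equals $-\infty$ outside some compact set, so it cannot dominate any $f$ that is finite somewhere outside every compact interval (e.g.\ $f\equiv 0$). Your own formula $f_k=f\wedge(M_k-\beta|x|)$ in fact gives $f_k\le f$, contradicting the later claim that $f_k\downarrow f$ from above. You also invoke a growth bound $\sup_u\cS(u)/(1+\|u\|)<\infty$ and a linear growth bound ``built into membership of $\uc$''; neither is a hypothesis of this lemma (the former belongs to Theorem~\ref{t:char}, and $\uc$ carries no growth restriction), so you are not entitled to use them here.

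The paper's proof avoids all of this by going in the opposite direction: set $f_k=f_{|[-k,k]}$ and $g_k=g_{|[-k,k]}$, so that $f_k\uparrow f$ and $g_k\uparrow g$ pointwise. Then the elementary fact that $h_n\uparrow h$ pointwise implies $\sup h_n\uparrow\sup h$ gives
\[
f_k\cdot\cS\cdot g_k=\sup(f_k+\cS+g_k)\;\longrightarrow\;\sup(f+\cS+g)=f\cdot\cS\cdot g
\]
almost surely, with no localization and no growth hypothesis on $\cS$ or $f$ needed. For the second reduction (Lipschitz-on-interval in place of $\uc_0$) your idea of approximating a compactly supported $f$ from above by Lipschitz functions is correct and is exactly what the paper does; there the support is already compact, so decreasing convergence of upper semicontinuous functions passes to the supremum by compactness, and again no extra hypotheses are required.
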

\begin{proof}
For a set $A$ defined the restriction $f_{|A}$ of $f\in \uc$ as $f$ on $A$ and $-\infty$ outside.

If $f_n\uparrow f, g_n\uparrow g$ pointwise, then a.s.
$$
f_n\cdot \cS \cdot g_n=\sup f_n+\cS+g_n \;\;\rightarrow \;\; \sup f+\cS+g =f\cdot S\cdot g.
$$
This implies convergence in distribution. Approximating $f,g$ from below by restrictions, we see that distributions of $f\cdot \cS \cdot g$ on $\uc_0^2$ determine those  on $\uc^2$. For $f$ whose support is contained in a compact interval  $A$, approximate from above by functions supported on $A$ that are Lipschitz there.
\end{proof}

We are now ready to prove Theorem \ref{t:char}
It can be used to characterize the KPZ fixed point. In those cases $\cS_2$ is the Airy sheet.

\subsection*{Proof of the characterization theorem }

Let $f,g\in \uc$ be defined on compact intervals $A,B$ and Lipschitz there. By Lemma \ref{l:usc0}, it suffices to show that the information given determines the law $f\cdot S\cdot g$. We use the shorthand $f+\cS+g$ for the function $f(x)+\cS(x,y)+g(y)$.
Let $f_n\to f, g_n\to g$ in $\uc$ and compactly in on $A,B$ wtih $f_n \ge f$ and $g_n \ge g$, and so that for every $b>0$ and sufficiently large  $n$ the quantities  $\sup f_n+b|x|$, $\sup g_n+b|x|$ are bounded in $n$.
Then $f_n+\cS+g_n \to f+\cS+g$ from above in $\uc(\R^2)$. Note that addition is not continuous in $\uc$, but the compact convergence ensures that this holds nevertheless.

Let $C$ be random so that $\cS(u)\le C(\|u\|_1+1)$, and let $D,N$ be random, depending on $C$ so that $f_n+2C|x|$, $g_n+2C|x|\le D$ for all $x$, and $n>N$.
We have, by the above
$$f_n+\cS+g_n\le D+C-C(|x|+|y|)
$$
this and convergence in $\uc(\R^2)$ implies that
$\sup f_n+\cS+g_n \to \sup f+\cS+g
$; this is easy to check using the fact that the location of the supremum has to stay  bounded.
This also works when $F_n, G_n(\cdot -)$ are the random functions given by Theorem \ref{t:density} approximating $f$ and $g(\cdot -)$. So
$$
F_n\cdot \cS\cdot G_n \to  f\cdot \cS \cdot g
$$
almost surely, so also in distribution.  \qed

\section{A general convergence theorem}\label{s:first}

In continuous polymer models, the functional $f\mapsto \sup f$ of last passage is replaced by
\begin{equation}\label{e:ncirc}
\supn f=\tfrac{1}{n}\log \int e^{n f(x)} \,dx, \qquad f\ncirc g =\, \supn (f+g).
\end{equation}
Metric composition is as implied: $\cK \ncirc \cS\,\,(x,y) =  \cK(x,\cdot) \ncirc \cS(\cdot,y)$.

Unlike lass passage percolation, polymer models may ignore isolated large values at metric composition. We need a concept to deal with this issue. We say that $f$ is $\eps${\bf-thick} at $x$ if
$$
\underset{y:|y-x|\le \eps}{\supn}f(y)\ge f(x)-\eps$$ The {\bf thickness constant} $\theta_n(f|A)$ of $f$ on $A$ is the infimum of $\eps$ so that $f$ is $\eps$-thick at all $x\in A$.

\begin{theorem}\label{t:first}
Assume that we have a sequence of random upper semicontinuous functions $\cK_n:\mathbb R^2\to \mathbb R\cup\{-\infty\}$ so that the following holds.
\begin{enumerate}[(i)]
    \item Thickness: for every $A$ compact $\theta_n(\cK_n|A)\cp 0$.
    \item Uniform growth: for some $b,b'$ reals, $(b|\cdot |)\ncirc\; \cK_n \ncirc (b'|\cdot|)$ is tight.
    \item Test function convergence:  for every $a,a'\in \R$, every sufficiently large negative $h\in \mathbb R$, all vectors $\nu,\nu'$ with a sufficiently small maximum there exists independent $F_n,G_n$, converging compactly in law, to $F\sim \ep(a,h,\nu), G(\cdot -)\sim\ep(a',h',\nu')$ so that
    \begin{equation}\label{e:bbp-conv}F_n\ncirc \cK_n \ncirc G_n\cd F\cdot \cS \cdot  G,
    \end{equation}
    and $\sup_x F_n+2b|x|$, $\sup_x G_n+2b|x|$ are tight form above.
\end{enumerate}
Then the $\cK_n$ are tight and the distance laws of every limit point of $\cK_n$ are given by those of $\cS$. If for some $b'>b$ and every compact interval $A$ the quantity $\sup_A (b'|\cdot|) \ncirc \cK_n$ is tight, then for every continuous initial condition $h$ with $\sup h(x)-b|x|<\infty$ we have $h\,\ncirc\, \cK_n \cd h\,\cdot \,\cS$ in $\uc$.
\end{theorem}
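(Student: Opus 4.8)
The plan is to combine the characterization theorem (Theorem~\ref{t:char}) with one analytic fact: along a sequence of random fields whose thickness tends to zero, polymer composition $\ncirc$ converges to metric composition $\cdot$. Tightness of $\cK_n$ comes first. Set $M_n:=(b|\cdot|)\ncirc\cK_n\ncirc(b'|\cdot|)=\supn_{x,y}(b|x|+\cK_n(x,y)+b'|y|)$, which is tight by (ii), so $\iint e^{n(b|x|+\cK_n(x,y)+b'|y|)}\,dx\,dy\le e^{nM_n}$. Fix a compact box $Q$; with high probability for large $n$ the field $\cK_n$ is $\eps$-thick on $Q$ by (i), and then restricting this integral to an $\eps$-ball about a point where $\cK_n$ takes the value $v$ forces $v\le M_n-b|x|-b'|y|+O(\eps)$ there. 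Letting $\eps\to0$ and exhausting the plane by boxes shows that, on an event of probability $\ge1-\delta$ uniform in $n$, all $\cK_n$ are dominated by one fixed function; since such families are precompact in $\uc(\R^2)$, $\{\cK_n\}$ is tight, and every limit point $\cK_\infty$ inherits the same growth bound (so in particular satisfies the hypothesis of Theorem~\ref{t:char}).

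The technical heart is a composition lemma: if $\cK_n\to\cK_\infty$ in $\uc(\R^2)$, jointly with $F_n\to F$ and $G_n\to G$ compactly (all independent, with the uniform upper growth $\sup_x F_n+2b|x|$, $\sup_x G_n+2b|x|$ tight from above), and $\theta_n(\cK_n|A)\to0$ for every compact $A$, then $F_n\ncirc\cK_n\ncirc G_n\to F\cdot\cK_\infty\cdot G$ almost surely. On a large box $Q$ one has $\sup_Q(F_n+\cK_n+G_n)\le\supn_Q(F_n+\cK_n+G_n)+\tfrac1n\log|Q|$; conversely thickness of $\cK_n$—compact convergence of $F_n,G_n$ to continuous limits makes their oscillations negligible, so $\theta_n(F_n+\cK_n+G_n|Q)\to0$ as well—gives $\supn_Q\ge\sup_Q-o(1)$; hypograph convergence of $\cK_n$ together with uniform convergence of $F_n,G_n$ gives $\sup_Q(F_n+\cK_n+G_n)\to\sup_Q(F+\cK_\infty+G)$; and the ambient growth and tightness hypotheses keep the mass of $e^{n(F_n+\cK_n+G_n)}$ concentrated on a large box, equivalently keep the location of the maximiser bounded, so no mass escapes to infinity. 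This is the step I expect to be the main obstacle: the thickness hypothesis (i) is exactly what keeps the $\log$-integral $\supn$ of the polymer from ignoring the tall, thin features that the $\sup$ of last passage would register.

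Granting this, let $\cK_\infty$ be a limit point of $\cK_n$. For each pointed edge process pair $(F,G)$ in the range allowed by (iii)—which is precisely the range of pointed edge processes arising in the proof of Theorem~\ref{t:density} (common heights tending to $-\infty$, drifts of small maximum, arbitrary base point)—take the approximants $F_n,G_n$ from (iii), pass to a sub-subsequence along which $(\cK_n,F_n,G_n)\to(\cK_\infty,F,G)$ almost surely (Skorokhod), and apply the composition lemma to obtain $F_n\ncirc\cK_n\ncirc G_n\to F\cdot\cK_\infty\cdot G$. Comparing with (iii) gives $F\cdot\cK_\infty\cdot G\ed F\cdot\cS\cdot G$ for all such pairs, so the proof of Theorem~\ref{t:char}—which only invokes this identity for the pointed edge processes produced in Theorem~\ref{t:density}, and then linearity of signed combinations—shows that $\cK_\infty$ has the distance laws of $\cS$.

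For the final assertion, assume in addition that $\sup_A(b'|\cdot|)\ncirc\cK_n$ is tight for every compact interval $A$ with $b'>b$, and fix continuous $h\le b|\cdot|+c$. The bound $(h\ncirc\cK_n)(y)\le c+((b'|\cdot|)\ncirc\cK_n)(y)$ controls $\sup_{y\in A}h\ncirc\cK_n$, while $(h\ncirc\cK_n)(y)\le c+M_n-b'|y|$ (or $c+M_n+|b'||y|$ if $b'\le0$) bounds $h\ncirc\cK_n$ above by a fixed function on a high-probability event, so $\{h\ncirc\cK_n\}$ is tight in $\uc$. Along a subsequence with $(\cK_n,h\ncirc\cK_n)\to(\cK_\infty,\Phi)$, a one-sided version of the composition lemma gives $\Phi=h\cdot\cK_\infty$: for the lower bound approximate $h$ from below by $h_{|[-R,R]}$ and use $\theta_n(\cK_n|\cdot)\to0$ on $[-R,R]$, and for the upper bound split the $x$-integral at $|x|=R$, bound $b|x|\le b'|x|-(b'-b)R$ on $|x|\ge R$, and use the extra tightness of $(b'|\cdot|)\ncirc\cK_n$ to push the tail contribution to $-\infty$ as $R\to\infty$. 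Finally, since $\cK_\infty$ has the distance laws of $\cS$, and these determine the finite–dimensional distributions of $h\cdot(\cdot)$—test against $g$ equal to $-a_i$ at $y_i$ and $-\infty$ elsewhere, so that $\{h\cdot\cK_\infty\cdot g\le0\}=\{(h\cdot\cK_\infty)(y_i)\le a_i\ \forall i\}$—we get $h\cdot\cK_\infty\ed h\cdot\cS$ as processes, using the tightness above and the a.s.\ continuity of $h\cdot\cS$. As every subsequential limit of $h\ncirc\cK_n$ is $h\cdot\cS$, it follows that $h\ncirc\cK_n\cd h\cdot\cS$ in $\uc$.
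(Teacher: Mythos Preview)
Your proposal follows essentially the same route as the paper's proof: tightness via thickness plus the growth bound (the paper's display \eqref{e:thickness}), Skorokhod representation along a subsequence, a ``composition lemma'' showing $\supn U_n\to\sup U$ for $U_n=F_n+\cK_n+G_n$ by combining thickness (lower bound), the trivial $\supn\le\sup$ inequality (upper bound), and a tail estimate on $A^c$ to localise the mass, and finally the Characterization Theorem~\ref{t:char}. The treatment of the last assertion is likewise parallel to the paper's, though you spell out more detail than the paper's one-line ``essentially the same argument''.

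Two small corrections. First, your displayed inequality $\sup_Q(F_n+\cK_n+G_n)\le\supn_Q(\cdots)+\tfrac1n\log|Q|$ is in the wrong direction; one has $\supn_Q f\le\sup_Q f+\tfrac1n\log|Q|$, and this is what gives the upper half of the sandwich (you correctly state the thickness-based lower bound $\supn_Q\ge\sup_Q-o(1)$ immediately after). Second, hypograph convergence alone does not give $\sup_Q U_n\to\sup_Q U$ on a fixed box; it gives $\limsup\sup_Q U_n\le\sup_Q U$ and $\liminf\sup_{Q'}U_n\ge\sup_Q U$ for any slightly larger $Q'\supset Q$, which is why the paper uses three nested boxes $A\subset A'\subset A''$. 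Neither point affects the substance of your argument.
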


\begin{proof}
We first show that when $\cK_n$ is a deterministic sequence satisfying the deterministic version of (i) and the dereministic version of (ii) given by
$$
\supn \cK_n+m \le d, \qquad m(x,y)=b|x|+b'|y|,
$$
then $\cK_n$ is precompact in $\uc(\R^2)$. To see this, let $A_0$ be compact, $\theta>0$ and $A$ the $\theta$-fattening of $A$. Let $u \in A_0$.
When $\theta_n(\cK_n,A)<\theta$, by the monotonicity of $\supn$ we have
\begin{equation}\label{e:thickness}
\cK_n(u) \le \theta+ \underset{A}{\supn }\;\cK_n\le \theta+ d-\inf_{A}g
\end{equation}
a bound that is uniform on $A_0$.

Returning to the original setting, by Prokohorov's  theorem we have tightness of $\cK_n$ in $\uc(\R^2)$, and we consider a distributional limit point $\cK_n \to \cK$ along a subsequence $N$. By the Skorokhod representation, we set up convergence so that this and the convergences in (ii) for fixed parameters hold almost surely, and the tightness conditions are replaced by a random constant bound.  In particular, we can have $\theta_n(\cK_n|A)\to 0$ a.s.\ for all closed balls $A$ of integer radius around $0$.

We will now work along the subsequence $N$. Below, the $F$s will be functions of the first variable and $G$s of the second: $F+G$ means the function $(x,y)\mapsto F(x)+G(y)$. Since $\cK_n $ converges in $\uc(\R^2)$ and $F_n+G_n$ converges compactly on $\R^2$, we have that
$$
U_n:=F_n+\cK_n+G_n \to F+\cK+G=:U
$$
in $\uc(\R^2)$. We consider the box $A=[\pm r]^2$. We have
\begin{align}\nonumber
\underset{A^c}{\supn}\; U_n& =\underset{A^c}{\supn}\; (F_n+G_n-m)+(U_n+m)\le \underset{A^c}{\supn}\;  (\sup_{A^c} F_n+G_n-m)+(U_n+m) \\&\le -cr + \underset{A^c}{\supn}\; {U_n+m}\le -cr + D\label{e:supnu}
\end{align}
Consider strictly increasing boxes $A,A',A''$ of the form $[\pm r]^2$.
By the thickness of $\cK_n$ and compact convergence of $F,G$ to conitnuous limits, $\theta(U_n|A')\to 0$. This with the argument for \eqref{e:thickness} implies the first inequality of
$$\liminf \underset{A''}{\supn}\; U_n \ge \liminf \sup_{A'} \,U_n \ge \sup_A U,$$ and the second follows by $\uc$ convergence. On the other hand, since $A$ is compact,
$$
\limsup \underset{A}{\supn}\; U_n \le \limsup \underset{A}{\sup}\; U_n \le \underset{A}{\sup}\; U.
$$
Together with \eqref{e:supnu} these inqualities imply $\supn U_n \to \sup U$.
%
%
%

By (iii) this implies that $F\cdot \cK\cdot G\ed F\cdot\cS \cdot G$ for the test functions in (iii).
Now the Characterization theorem, Theorem \ref{t:char} implies that $\cK$ and $\cS$ have the same distance laws. Note that in the approximations used in the Characerization theroem, $\max_\nu$ can be arbitrarily large and negative.

The argument above showed $F_n\ncirc \cK_n \ncirc G_n \to F\cdot \cK \cdot G$. Turning to the last claim, essentially the same argument using the given bounds also gives $h\ncirc \cK_n \cdot g\to h \cdot \cK \cdot g$ for any $g$ compactly defined and continuous on $g^{-1}(\R)$.
This implies that  $h\ncirc \cK_n \to h\cdot \cK$ in $\uc$. But $h\cdot \cK \ed h\cdot \cS$ since $\cK,\cS$ have the same distance laws.
\end{proof}

\section{A convergence theorem based on the Burke property}

In this section, we prove a variant of Theorem \ref{t:first} which is more tailored to the polymer models we consider. It reduces tightness requirements to the minimum. While for the KPZ equation, the tightness needed for strong convergence has been established previously, this could be a daunting project to carry out in other cases.

There are {\it only} two extra inputs needed, beyond convergence of BBP marginals. The first is the {\bf metric Burke property} satisfied by all models we consider. This can be stated in terms of {\bf line metrics} of functions, see \eqref{e:line metric}.

We proceed by example. The O'Connell-Yor model is simply a composition of line metrics of Brownian motion $\cO_n=\cB_1\circ_1 \cdots \circ_1\cB_n$ The metric Burke property in this case states that independent Brownian line metrics with different drifts commute in law: $\cB_\nu\circ_1 \cB_\mu\ed \cB_\mu \circ_1 \cB_\nu$ as functions of $x,y$. This can then be iterated, and gives $\cB_\nu\circ_1 \cO_n\ed \cO_n \circ_1 \cB_\nu$. A consequence is the more usual Burke property: $B_\nu \circ_1 \cB_\mu\ed B_\nu+C$, where $B_\nu$ is a two-sided Brownian path, and $\mu<\nu$, and $C$ is a random constant.

We note that taking limits of the metric Burke property in Brownian last passage shows that  $\cB_\nu \circ \cS \ed \cS \circ \cB_\nu$. Similarly, using tightness and convergence to the Airy sheet, one gets that such compositions with $\cS$ have BBP statistics.

In addition, we rely on tightness of the systems started from stationary initial conditions, in other words, tightness of the Baik-Rains statistics. In many cases, this follows from BBP marginals. It has been first established in much greater generality by Sepp\"al\"ainen's workshop see \cite{balazs2010order}, \cite{balazs2011fluctuation}, \cite{seppalainen2010bounds}, \cite{flores2014fluctuation}.

We will use the following simple fact. Let $f_n\to f$ in $\uc$, and assume that for every compact $A$
\begin{equation}\label{e:unifrom}
\lim_{\eps \downarrow 0}\; \limsup_{n\to \infty} \sup_{x,y\in A^2:x\le y \le x+\eps} f_n(x)-f_n(y)=0
\end{equation}
and $f$ is continuous. Then $f_n\to f$ compactly.

\subsection*{The proof of Theorem \ref{t:second}}

We first bound the growth of $\cK_n$ uniformly. Let $\nu\in \R$. By near-linearity, monotonicty and tightness at stationarity, now using $x$ instead of $\cdot$ as a dummy variable for functions arguments, we have
\begin{equation}\label{e:growth0}
(C-\eps |x |+\nu x)\ncirc \cK_n\;\le \;B_{\nu,n}\ncirc \cK_n \;\ed\; B'_{\nu,n}+C' \;\le \;C''+\eps |x|+\nu x
\end{equation}
with tight random constants.  Also, since $(f\vee g)\ncirc h \le (f \ncirc h) \vee ( g\ncirc h)+(\log 2)/n$, and \eqref{e:growth0} applied to $\pm (\nu+\eps)$ with $\nu\ge 0$ we get
\begin{equation}\label{e:growth}
(\nu |x|)\ncirc  \cK_n  \le C+ (\nu+2\eps) |x|.
\end{equation}
The $\nu=0$ case implies
\begin{equation}\label{e:sublin}
1 \ncirc\cK_n \ncirc -2|\eps x| \le C.
\end{equation}

Next, we verify the assumptions of Theorem \ref{t:first} for the ``smoothed'' kernel
$\cK_n'=\cB_{a,n} \ncirc \cK_n \ncirc \cB'_{a,n}$ for some $a$ large and negative. The metric Burke property (i) for $\cK_n$ implies (i) for $\cK_n'$.  This and (ii) for $\cK_n$ implies (ii) for $\cK_n'$, as long as $a<\nu$. So $\cK'_n$ satisfy the growth conditions above.

The following inequality is a consequence of two properties of $\supn$: monotonicity, and $\supn (f+a)=a+\supn f$ for constants $a$. For any line metric \eqref{e:line metric} $\cf$ with function $f$, any measurable function $g$,  any $h\ge 0$ and $D_h g(x)=g(x+h)-g(x)$ we have
\begin{equation}\label{e:c-bounds}
\cf \ncirc g\,(x-h)\ge \cf \ncirc g\,(x)+ D_h f(x-h), \quad g\ncirc \cf \,(x+h)\ge g\ncirc \cf\,(x)+D_h f(x).
\end{equation}
Applying this to $B_{a,n}$ and $\cK'$ twice, we get that on any set
$$
\cK'(x-h,y+b)\ge \cK'(x,y)+D_hB_{a,n}(x-h)+D_yB_{a,n}'(y)
$$
and for any small rectangle $A=u+[\pm \eps]^2$
$$
\supn_A \cK'_n \ge \cK'_n(u) -m(\eps)-m'(\eps)+2(\log 2\eps)/n
$$
where $m,m'$ are the uniform moduli of continuity of $B_{a,n},B_{a,n}'$ in the $\eps$-fattening of $A$. Compact convergence of $B,B'$ in law now implies that $\theta(\cK'_n|A)\cp 0$, verifying (i).

The metric Burke property and the BBP asymptotics assumption verifies (iii). This step uses the simple fact that $\cB_{\nu_1}\ncirc \cdots \ncirc \cB_{\nu_k}\to \cB_{\nu_1} \cdots \cB_{\nu_k}$ compactly in law with the required growth conditions.

Theorem \ref{t:first} with \eqref{e:growth} shows that that for every continuous $h$ with $\sup h(x)/(1+|x|)<\infty$, we have that  +
$F_n=h\ncirc \cK'_n$ converges in law in $\uc$ to the continuous $F=h\cdot \cB_a\cdot \cS\cdot \cB'_a$. We now upgrade to compact convergence. For this note, that $F_n=G_n \ncirc \cB'_{a,n}$ with $G_n=h\ncirc \cB_n \ncirc \cK_n$. Since $F_n$ is tight in $\uc$ and $B'_{a,n}$ is tight for compact convergence, there exists joint subsequential limits.

We use Skorokhod representation to get almost sure convergence. The second bound in \eqref{e:c-bounds} applied to $F,B_{a,n}$ gives
$$
F_n(x+h) \ge  F_n(x) + D_hB_{a,n}(x)
$$
now compact convergence of the $B_{a,n}$ and the fact \eqref{e:unifrom} implies that $F_n\to F$ compactly a.s. along the subsequence. It follows that as $n\to\infty$
\begin{equation}\label{e:F}
F_n\cd F \qquad \mbox{compactly}.
\end{equation}
Write $H_{a,n}=h\ncirc \cB_{a,n}\ncirc \cB_{a,n}''$, $H_a=h\cdot \cB_a \cdot \cB''_a$ and use the Burke property to restate \eqref{e:F} as
$$
H_{a,n} \ncirc \cK_n \cd H_a \cdot \cS \qquad \mbox{compactly.}
$$
The continuity and the tails bounds of $h$ implies that for large enough $a$, compactly,
\begin{equation}\label{e:Hconv}
H_{a,n}\cd H_a, \quad \mbox{ as }n\to \infty \qquad  \mbox{ and } \qquad H_a \cd h, \quad \mbox{ as } a\to  -\infty.
\end{equation}
Consider an interval  $A=[-r,r]$.  With supn-s taken in the  first variable of $\cK_n$ we have
\begin{equation}\label{e:onA}
 |(\underset{A}{\supn}\; H_{n}+\cK_n)-(\underset{A}{\supn}\;h + \cK_n) | \le \sup_A |H_{a,n}-h| \cd \sup_A |H_a-h|
\end{equation}
as $n\to\infty$ by \eqref{e:Hconv}. In turn, $\sup_A |H_a-h|\cp 0$ as $a\to 0$ by \eqref{e:Hconv}.

The bound \eqref{e:growth} implies that for any compact set $J$ we have
\begin{equation}\label{e:offA}
\sup_{y\in J}\;\underset{x\in A^c}{\supn}\; h(x) + \cK_n(x,y) , \quad \sup_{y\in J}\;\underset{x\in A^c}{\supn}\; H_{a,n} + \cK_n(x,y) \le C-C'r, \qquad \forall r\ge 0,
\end{equation}
where $C,C'$ are tight in $n$. Since $B_{a,n}$ has stochasticly increasing increments in $a$, and so the law of $H_{a,n}$ is monotone in $a$, the same constants work as $a\to -\infty$.

We are now ready to prove $h\ncirc \cK_n\cd h\cdot S$.  Given an open  neighborhood $U$ of the law $h\cdot \cS$ with respect to compact convergence, we pick $a_0$ so that $H_a\cdot S$ has law in $U$ for $a>a_0$.

It suffices to show that for all $\delta>0$ and compact interval  $J$, there is $a>a_0$ so that for all large enough $n$, $P(|H_{a,n}\ncirc \cK_{a,n}-h\cdot \cK_n|\one_J>c\delta)\le c\delta$ with $c$ universal. We say an event is likely if it has probability at least $1-c\delta$ for some universal $c$.

First pick $m$ so that likely $\sup  h\cdot S\ge -m$. Since $H_a\ge h$, the event $\sup H_a\cdot S\ge -m$ is likely as well.
Now pick $r$ large enough so that the quantities in \eqref{e:offA} are likely at most $-2m$. Then pick $a<a_0$ large negative enough so that we likely have $\sup_A|H_a-h|\le \delta $, then pick $n_0$ so that for $n\ge n_0$ we likely have $\supn_A |H_{a,n}-h|<2\delta$. When these events hold, we have
$|H_{a,n}\ncirc \cK_n-h\ncirc \cK_n|\one_J<2\delta$, as required. $\qed$

\begin{remark}\label{r:I}
Let $\mathcal I$ be the set of initial conditions for which the convergence conclusion of Theorem \ref{t:second} holds.  $\mathcal I$ contains all continuous functions that grow at most linearly. Note that $\mathcal I$ contains $h:\mathbb R\to \mathbb R\cup \{-\infty\}$, not identically $-\infty$,  with the same growth condition but with only $e^h$ continuous (we ``compactifty'' at $-\infty$). To see this,  approximate $h$ by $h_m=h\vee m$, for integers $m$ tending to $-\infty$. The theorem holds for $h_m$, and  note that $F_n=h\ncirc \cK_n \le h_m \ncirc \cK_n \le (h\ncirc \cK_m)\wedge (n\ncirc \cK_n)+(\log 2)/\log n$. Let $G_n=0\ncirc \cK_m$, $G=0\cdot \cS$, and  Let $F=h\cdot \cS$, which is real-valued. Thus, using Skorokohod representation, along subsequences,  for every negative $m$, $F_n\vee (G_n+m)\to F\vee (G+m)$ compactly and $G_n\to G$ compactly, which implies that $F_n \to F$ compactly.

Next, note also that for any function $h$ for which there are sequences $f_n\le h \le g_n\in \mathcal I$ so that $f_n\cdot \cS,g_n\cdot \cS\to h\cdot \cS$ is also in $\mathcal I$, by a monotonicity argument. This covers all classicly considered initial conditions, such as half-Brownian or $\zero_{[0,\infty)}$. What is missing are narrow wedges, for which the claim of the theorem has to be modified. \end{remark}

Using the results and ideas from \cite{LIS} we can extend Theorem \ref{t:second} to subsequential compact convergence of the $\cK_n$. This implies convergence for all classical initial conditions, including combinations of narrow wedges. For this, recall the quadrangle inequality satisfied by the Airy sheet, see \cite{DOV}
$$
\cS(x,y)+\cS(x',y') \ge \cS(x,y')+ \cS(x',y), \qquad \mbox{ for } x\le x', y\le y'.
$$
For discrete deterministic polymers, the Lindstr\"om–Gessel–Viennot formula for the number of nonintersecting paths $x\to x', y\to y'$ is a determinant of a $2\times 2$ matrix. This is nonnegative, which gives the quadrangle inequality. For continuum models, this can be obtained as a limit.

\begin{remark}\label{r:LIS} In \cite{LIS}, sequences of random $\cK_n\in \uc(\R^2)$ satisfying the triangle inequality are considered. Assume that for ever compact $A$, the function $\cK_n$ is continuous on $A$ for large $n$.  If for every rational $x$, the sequences $\cK_n(x,\cdot)$ and $\cK_n(\cdot, x)$ are tight with respect to compact convergence, then $\cK_n$ is tight with respect to compact convergence. The proof is straightforward, and we do not reproduce it here.
\end{remark}

\begin{corollary}[Compact convergence]\label{c:compact}
Assume that in addition to the conditions of Theorem \ref{t:second}, $\cK_n$ also satisfies the quadrangle inequality for each $n$.
Then $\cK_n$ is tight with respect to compact convergence, and each limit point is continuous and has the same distance laws as $\cS$.
\end{corollary}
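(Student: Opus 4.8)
The plan is to upgrade the marginal-level convergence already given by Theorem \ref{t:second} to convergence of the full kernel, with the quadrangle inequality playing the role of the device that couples the two coordinates. The distance-law half of the statement is essentially free: if $\cK_{n_k}\to\cK$ compactly along a subsequence, then $\cK$ is continuous (a compact limit of continuous functions), and since the $\cK_n$ obey the uniform growth bound \eqref{e:growth}, a Laplace/Varadhan argument gives $h\ncirc\cK_{n_k}\to h\cdot\cK$ compactly for every continuous $h$ with $\sup h(x)/(1+|x|)<\infty$; comparing with the conclusion of Theorem \ref{t:second} gives $h\cdot\cK\ed h\cdot\cS$ for all such $h$, and then conditioning on a pointed edge process and invoking Theorem \ref{t:char} shows that $\cK$ and $\cS$ have the same distance laws. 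So the real work is to prove that $\cK_n$ is tight for compact convergence, and by the argument behind Remark \ref{r:LIS} --- which uses only the relevant monotonicity and therefore applies with the quadrangle inequality in place of the triangle inequality used there --- it suffices to show that for every rational $c$ the two families of slices $\cK_n(c,\cdot)$ and $\cK_n(\cdot,c)$ are tight for compact convergence.

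I would first record the sandwich that makes this reduction work, since it also produces continuity of the limit points. The quadrangle inequality says that $y\mapsto\cK_n(x',y)-\cK_n(x,y)$ is nondecreasing when $x\le x'$; so, fixing a rational grid of mesh $\delta$ and a cell $[x_1,x_1+\delta]\times[y_1,y_1+\delta]$, one gets for $(x,y)$ in the cell
$$
\cK_n(x_1,y)+\cK_n(x,y_1)-\cK_n(x_1,y_1)\ \le\ \cK_n(x,y)\ \le\ \cK_n(x_1,y)+\cK_n(x,y_1+\delta)-\cK_n(x_1,y_1+\delta),
$$
and the two sides differ by the nonnegative second difference of $\cK_n$ over the cell, which is bounded by the oscillation over $[x_1,x_1+\delta]$ of the two horizontal slices at heights $y_1$ and $y_1+\delta$. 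Hence the oscillation of $\cK_n$ over any compact box and its pointwise values are controlled, uniformly in $n$, by the oscillations and values of the finitely many grid slices plus a term that tends to $0$ as $\delta\to0$; this yields tightness, forces continuity of limit points, and pins the limit down once the slices converge.

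The step I expect to be the main obstacle is slice tightness, because the hypotheses (i)--(v) of Theorem \ref{t:second} control only $\supn$-smoothed and stationary versions of $\cK_n$, not a bare slice through a point. The envelope is not the difficulty: \eqref{e:growth} bounds the slices from above on compacts, and together with the quadrangle inequality --- a convexity-type condition, which in the presence of an upper bound excludes deep downward excursions --- it also gives tight pointwise lower bounds. For the modulus of continuity of a slice the idea is to import it from the stationary picture: by tightness in stationarity (ii) and the Brownian limit (iv), $B_{\nu,n}\ncirc\cK_n$ equals, up to an additive constant that is tight from above, a copy of the Brownian path $B_{\nu,n}$, hence has a modulus of continuity that is uniform in $n$; using near-linearity (iii) to localise the relevant range of $x$ and the monotonicity in $y$ of the dominant $x$-region of $\supn_x(B_{\nu,n}(x)+\cK_n(x,y))$ (a consequence of the quadrangle inequality), one can, as $\nu$ ranges over a small interval about $0$, recover $\cK_n(c,\cdot)$ from these stationary objects and so transfer the uniform modulus bound to it; the symmetric argument handles $\cK_n(\cdot,c)$. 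The delicate point in this transfer is that the error between $B_{\nu,n}\ncirc\cK_n$ and the slice involves the local shape of $\cK_n$ in the transversal direction, which must be controlled without circularity --- this is the heart of the argument. For the two models of the paper one may instead shortcut the transfer by quoting the classical tightness of the O'Connell--Yor point-to-point free-energy profile and of $\log$ of the stochastic heat equation from a delta; but the route through (ii)--(iv) is what preserves the generality of Theorem \ref{t:second}. Feeding slice tightness back into the preceding two paragraphs finishes the proof.
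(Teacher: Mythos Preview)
Your overall architecture matches the paper: reduce compact tightness of $\cK_n$ to tightness of the one-variable slices $\cK_n(x,\cdot)$ and $\cK_n(\cdot,y)$ via the quadrangle inequality, then invoke Remark~\ref{r:LIS}. The distance-law paragraph is also fine.

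The gap is in your slice-tightness step. You propose to transfer a modulus of continuity from the stationary objects $B_{\nu,n}\ncirc\cK_n$ to the bare slice $\cK_n(c,\cdot)$ by varying $\nu$ and localising the dominant region of the $\supn$, and you yourself flag that this risks circularity because the error term involves the transversal shape of $\cK_n$. That concern is justified: the stationary profile $B_{\nu,n}\ncirc\cK_n$ is an average over all $x$, and recovering the slice through a single point $c$ from it requires exactly the kind of local control of $\cK_n$ near $x=c$ that you are trying to prove. You do not give a mechanism for breaking that loop, and I do not see one along those lines.

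The paper's device avoids the circularity entirely. Instead of stationary test functions it uses \emph{one-sided} initial conditions $h_+$ supported on $[x,\infty)$ and $h_-$ supported on $(-\infty,x]$, chosen close to the narrow wedge $\zero_x$. For such $h_\pm$ the quadrangle inequality gives, for $y\le y'$,
\[
h_-\!\ncirc\cK_n(y)-h_-\!\ncirc\cK_n(y')+\cK_n(x,y')\ \le\ \cK_n(x,y)\ \le\ h_+\!\ncirc\cK_n(y)-h_+\!\ncirc\cK_n(y')+\cK_n(x,y'),
\]
a direct sandwich of the slice by objects of the form $h_\pm\ncirc\cK_n$ plus a constant. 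The point is that compact convergence of $h_\pm\ncirc\cK_n$ is \emph{already established} by Theorem~\ref{t:second} together with Remark~\ref{r:I} (which extends the admissible class $\mathcal I$ to include, e.g., $\zero_{[x,x+\eps]}$), so no circularity arises. Letting $h_\pm$ approach $\zero_x$ squeezes the sandwich, yielding tightness---indeed convergence---of $\cK_n(x,\cdot)$. This is the idea you are missing.
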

\begin{proof}
The definition of $\supn$ and the quadrangle inequality implies that for any initial condition $h_+$ supported above $x$,
$$
\cK_n(x,y) + h_+\ncirc \cK_n(y') \ge
\cK_n(x,y') + h_+\ncirc \cK_n(y),
$$
and, for $h_-$ supported below $x$,
$$
h_-\ncirc \cK_n(y) +\cK_n(x,y') \ge
h_-\ncirc \cK_n(y')+\cK_n(x,y).
$$
Now fix $x,y'$ and and
set $W_n=\cK_n(x,y')$, $F_{n\pm}(y)=h_{\pm}\ncirc \cK_n(y)-h_{\pm}\ncirc \cK_n(y')$.
Then for $y\le y'$
$$
F_{n-}(y)+G_n \le \cK_n(x,y) \le F_{n+}(y)+G_n
$$
Now by Remark \ref{r:I} we can pick $h_\pm$ to be close to $x$ so that  $F_{n\pm}$ converge compactly, and their
limit $h_\pm \cdot S$ are close to $\cS(x,\cdot)$. It follows that $\cK(x,\cdot)$ converges compactly in law to $\cS(x,\cdot)$, at least for $y\le y'$, but $y'$ was arbitrary. Thus for every  $x$, the sequence $\cK_n(x,\cdot)$ is tight and by a symmetric argument, so is $\cK_n(\cdot,x)$. The claim now follows from Remark \ref{r:LIS}.
\end{proof}

\section{Application to two models}
\label{s:application}

Theorem \ref{t:char} is more general than needed for our applications. In particular, conditions (i), (iii), (iv) are well known or straightforward, and so is the quadrangle inequality. We show the single time version of Theorem \ref{t:main}. The multiple time result is a consequence of the Markov structure in these models.

\subsection*{The KPZ equation}
The Cole-Hopf transformation turns the KPZ equation into the stochastic heat equation,
$$
\dot Z=\tfrac12 Z''+ZW,
$$
where $W$ is standard space-time white noise, see the notes  of \cite{quastel2011introduction} for details.
Let $Z(x;y,t)$ be the value at time $t$ and location $y$ of the solution of the stochastic heat equation with initial condition $\delta_x$ at time $0$.
Let us take logarithms to express the {\it free energy}, center and 1-2-3 scale to get
$$
\cK_n(x,y)=\frac{\log Z(2n^2x;2n^2y, 2n^3)+n^3/12}{n},
$$
and let
$$
\supn f = \frac{1}{n}\log \int e^{nf},
$$
and recall $\ncirc$, \eqref{e:ncirc}. Let $\cB_\nu$ be the Brownian metric: $\cB_\nu(x,y)=B_\nu(y)-B_\nu(x)$ when $x\le y$ and $-\infty$ otherwise, where $B_\nu$ is Brownian motion with drift $\nu$ and variance 2. Repeated $\cB_\nu$s will be assumed independent.


Translated to our language, Corollary 1.15 of \cite{borodin2014free} shows that for any $\nu,x,y$ fixed,
\begin{equation}\label{e:BBP-limit}
\cB_{\nu,n} \ncirc \cK_n (x,y) \cd  \cB_\nu \cdot \cS(x,y).
\end{equation}
The tightness of the shift of the Brownian stationary measures was shown in \cite{balazs2011fluctuation}. The stated convergence follows from Theorem \ref{t:second}. Note that limit can be taken along any $n\to\infty$, not just integers.

\subsection*{The O'Connell-Yor polymer}

See \cite{o2012directed} and \cite{o2001brownian} for background.
The O'Connell-Yor polymer, and  free energy can be defined using the above notation by
$$
F(x;y,k)=\cB^1 \circ_1  \cdots \circ_1 \cB^k(x,y),$$ where the $\cB^i$ are independent with drift $0$, and, for this defintion only, varianca 1. Following \cite{borodin2014free}, consider $\Psi(\theta)=(\log\Gamma(\theta))'$. The derivative $\Psi'$ is a continuous bijection $\R^+\to\mathbb \R^+$, and we will take limits in the asymptotic (space,time) direction $(1,\Psi'(\theta))$.
We scale values by $n$ to to be consistent with $\supn$. We set
$$
\cK_n(x,y)=\frac{F(2xn^2;bn^3 + 2yn^2,an^3)-cn^3+2(x-y)n^2\Psi'}{n}, \quad a=-2/\Psi'', \; b=a\Psi', \;c=\theta b - a\Psi.
$$
Again, as $an^3\to \infty$ along integer values, Theorem 1.3 (b) of \cite{borodin2014free} amounts to \eqref{e:BBP-limit} for this case as well. The distributions on both side depend only on $x-y$, so we may assume $x=0$. To be precise, Theorem 1.3 shows these claims for $y=0$, as it addresses asymptotics precisely on straight lines of the spacetime parameters.
However, this is merely for convenience -- their arguments give \eqref{e:BBP-limit} for the very slightly off-center $y$ as well. The verification now proceeds exactly as for the KPZ equation, with tightness, in this case, shown in \cite{seppalainen2010bounds}.

\smallskip

\noindent {\bf Acknowledgments.}
B.V. was supported by the Canada Research Chair program, the NSERC Discovery Accelerator grant, and the MTA Momentum Random Spectra research group.

\bibliographystyle{dcu}
\bibliography{heat}

@article{kardar1986dynamic,
	title={Dynamic scaling of growing interfaces},
	author={Kardar, Mehran and Parisi, Giorgio and Zhang,
Yi-Cheng},
	journal={Physical Review Letters},
	volume={56},
	number={9},
	pages={889},
	year={1986},
	publisher={APS}
}

@article{DV,
	title={Basic properties of the {A}iry line ensemble},
	author={Dauvergne, Duncan and Vir{\'a}g, B{\'a}lint},
	journal={arXiv:1812.00311 \hspace{-.7em}},
	year={2018}
}

@article{DOV,
	title={The directed landscape},
	author={Dauvergne, Duncan and Ortmann, Janosch and
Vir{\'a}g, B{\'a}lint},
	journal={arXiv:1812.00309},
	year={2018}
}

@article{corwin2016kpz,
	Author = {Corwin, Ivan and Hammond, Alan},
	Date-Added = {2018-11-05 23:52:45 +0000},
	Date-Modified = {2018-11-05 23:52:45 +0000},
	Journal = {Probability Theory and Related Fields},
	Number = {1-2},
	Pages = {67--185},
	Publisher = {Springer},
	Title = {{KPZ} line ensemble},
	Volume = {166},
	Year = {2016}}

@article{ledoux2010small,
	Author = {Ledoux, Michel and Rider, Brian},
	Date-Added = {2018-11-05 23:36:01 +0000},
	Date-Modified = {2018-11-06 00:00:33 +0000},
	Journal = {Electronic Journal of Probability},
	Pages = {1319--1343},
	Publisher = {The Institute of Mathematical Statistics and
the Bernoulli Society},
	Title = {Small deviations for beta ensembles},
	Volume = {15},
	Year = {2010}}

@article{biane2005littelmann,
  title={Littelmann paths and {B}rownian paths},
  author={Biane, Philippe and Bougerol, Philippe and
  O'Connell, Neil},
  journal={Duke Mathematical Journal},
  volume={130},
  number={1},
  pages={127--167},
  year={2005},
  publisher={Duke University Press}
}

@article{matetski2016kpz,
  title={The {KPZ} fixed point},
  author={Matetski, Konstantin and Quastel, Jeremy and
  Remenik, Daniel},
  journal={arXiv:1701.00018},
  year={2016}
}

@article{quastel2011introduction,
  title={Introduction to {KPZ}},
  author={Quastel, Jeremy},
  journal={Current {D}evelopments in {M}athematics},
  volume={2011},
  number={1},
  year={2011},
  publisher={International Press of Boston}
}

@inproceedings{noumi2002tropical,
	title={Tropical {R}obinson-{S}chensted-{K}nuth
correspondence and birational {W}eyl group actions},
	author={Noumi, Masatoshi and Yamada, Yasuhiko},
	booktitle={Representation Theory of Algebraic Groups and
Quantum Groups},
	pages={371--442},
	year={2004},
	organization={Mathematical Society of Japan}
}

@article{corwin2020invariance,
	title={Invariance of polymer partition functions under
the geometric {RSK} correspondence},
	author={Corwin, Ivan},
	journal={arXiv preprint arXiv:2001.01867},
	year={2020}
}

@article{o2001brownian,
	title={Brownian analogues of {B}urke's theorem},
	author={O'Connell, Neil and Yor, Marc},
	journal={Stochastic processes and their applications},
	volume={96},
	number={2},
	pages={285--304},
	year={2001},
	publisher={Elsevier}
}

@inproceedings{dieker2008determinantal,
  title={Determinantal transition kernels for some interacting particles on the line},
  author={Dieker, AB and Warren, J},
  booktitle={Annales de l'IHP Probabilit{\'e}s et statistiques},
  volume={44},
  number={6},
  pages={1162--1172},
  year={2008}
}

@book{morters2010brownian,
  title={Brownian motion},
  author={M{\"o}rters, Peter and Peres, Yuval},
  volume={30},
  year={2010},
  publisher={Cambridge University Press}
}

@article{flores2014fluctuation,
  title={Fluctuation exponents for directed polymers in the intermediate disorder regime},
  author={Flores, Gregorio Moreno and Sepp{\"a}l{\"a}inen, Timo and Valk{\'o}, Benedek},
  journal={Electronic Journal of Probability},
  volume={19},
  year={2014},
  publisher={The Institute of Mathematical Statistics and the Bernoulli Society}
}

@article{seppalainen2010bounds,
  title={Bounds for scaling exponents for a 1+ 1 dimensional directed polymer in a Brownian environment},
  author={Sepp{\"a}l{\"a}inen, Timo and Valk{\'o}, Benedek},
  journal={Alea},
  volume={7},
  pages={451--476},
  year={2010}
}

@article{balazs2010order,
  title={Order of current variance and diffusivity in the asymmetric simple exclusion process},
  author={Bal{\'a}zs, M{\'a}rton and Sepp{\"a}l{\"a}inen, Timo},
  journal={Annals of mathematics},
  pages={1237--1265},
  year={2010},
  publisher={JSTOR}
}

@article{balazs2011fluctuation,
  title={Fluctuation exponent of the KPZ/stochastic Burgers equation},
  author={Bal{\'a}zs, M{\'a}rton and Quastel, Jeremy and Sepp{\"a}l{\"a}inen, Timo},
  journal={Journal of the American Mathematical Society},
  volume={24},
  number={3},
  pages={683--708},
  year={2011}
}

@article{borodin2014free,
  title={Free energy fluctuations for directed polymers in random media in 1+ 1 dimension},
  author={Borodin, Alexei and Corwin, Ivan and Ferrari, Patrik},
  journal={Communications on Pure and Applied Mathematics},
  volume={67},
  number={7},
  pages={1129--1214},
  year={2014},
  publisher={Wiley Online Library}
}

@misc{dimitrov2020characterization,
    title={Characterization of Brownian Gibbsian line ensembles},
    author={Evgeni Dimitrov and Konstantin Matetski},
    year={2020},
    eprint={2002.00684},
    archivePrefix={arXiv},
    primaryClass={math.PR}
}

@article{aggarwal2019phase,
  title={Phase transitions in the ASEP and stochastic six-vertex model},
  author={Aggarwal, Amol and Borodin, Alexei},
  journal={The Annals of Probability},
  volume={47},
  number={2},
  pages={613--689},
  year={2019},
  publisher={Institute of Mathematical Statistics}
}

@article{o2012directed,
  title={Directed polymers and the quantum Toda lattice},
  author={O’Connell, Neil},
  journal={The Annals of Probability},
  volume={40},
  number={2},
  pages={437--458},
  year={2012},
  publisher={Institute of Mathematical Statistics}
}

@article{baik2005phase,
  title={Phase transition of the largest eigenvalue for nonnull complex sample covariance matrices},
  author={Baik, Jinho and {Ben~Arous} and P{\'e}ch{\'e}, Sandrine},
  journal={The Annals of Probability},
  volume={33},
  number={5},
  pages={1643--1697},
  year={2005},
  publisher={Institute of Mathematical Statistics}
}

@article{bloemendal2013limits,
  title={Limits of spiked random matrices I},
  author={Bloemendal, Alex and Vir{\'a}g, B{\'a}lint},
  journal={Probability Theory and Related Fields},
  volume={156},
  number={3-4},
  pages={795--825},
  year={2013},
  publisher={Springer}
}

@article{bloemendal2016limits,
  title={Limits of spiked random matrices II},
  author={Bloemendal, Alex and Vir{\'a}g, B{\'a}lint},
  journal={The Annals of Probability},
  volume={44},
  number={4},
  pages={2726--2769},
  year={2016},
  publisher={Institute of Mathematical Statistics}
}

@misc{QS,
author = {Jeremy Quastel and Sourav Sharkar},
title = {Long time, large scale fluctuations of the Kardar-Parisi-Zhang equation
},
howpublished = {Arxiv},
year={2020}
}

@misc{LIS,
author = {Duncan Dauvergne and Balint Vir\'ag},
title = {The scaling limit of the longest increasing subsequence},
howpublished = {In preparation},
year={2020}
}

\end{document}